\newcommand{\qq}{\mathbb{Q}}   
\newcommand{\Z}{\mathbb{Z}}     
\newcommand{\HY}{\mathcal{SE}}
\newcommand{\casetwo}[4]{\left\{ \begin{array}{ll} #1 &\mbox{if $#2$} \\[2mm] #3 &\mbox{if $#4$}\,. \end{array} \right.}
\newcommand{\casetwoct}[3]{\left\{ \begin{array}{ll} #1 &\mbox{if $#2$} \\[2mm] #3 &\mbox{otherwise}\,, \end{array} \right.}
\theoremstyle{plain} 
\newtheorem{theo}{Theorem}[section]
\newtheorem{lem}[theo]{Lemma}
\newtheorem{prop}[theo]{Proposition}
\newtheorem{cor}[theo]{Corollary}
\newtheorem{conj}[theo]{Conjecture}
\theoremstyle{definition}
\newtheorem{example}[theo]{Example}
\newtheorem{defi}[theo]{Definition}
\theoremstyle{remark}
\newtheorem{rem}[theo]{Remark}
\newtheorem{rems}[theo]{Remarks}
\title[A Schubert basis in equivariant elliptic cohomology]
{A Schubert basis in equivariant elliptic cohomology}
\author[C.~Lenart]{Cristian Lenart}
\address[Cristian Lenart]{Department of Mathematics and Statistics, State University of New York at Albany, 
Albany, NY 12222, U.S.A.}
\email{clenart@albany.edu}
\urladdr{http://www.albany.edu/\~{}lenart/}
\author[K.~Zainoulline]{Kirill Zainoulline}
\address[Kirill Zainoulline]{Department of Mathematics and Statistics, University of Ottawa, 585 King Edward Street, Ottawa, ON, K1N 6N5, Canada}
\email{kirill@uottawa.ca}
\urladdr{http://mysite.science.uottawa.ca/kzaynull/}
\thanks{C.L. was partially supported by the NSF grant DMS--1362627. K.Z. was partially supported by the NSERC Discovery grant  385795-2010 and the Early Researcher Award (Ontario). \\
Keywords: Schubert calculus, elliptic cohomology, flag variety, Hecke algebra, Kazhdan-Lusztig basis \\
MSC: 14M15, 14F43, 55N20, 55N22, 19L47, 05E99}
\begin{document}

\begin{abstract}
 We address the problem of defining Schubert classes independently of a reduced word in equivariant elliptic cohomology, based on the Kazhdan-Lusztig basis of a corresponding Hecke algebra. We study some basic properties of these classes, and make two important conjectures about them: a positivity conjecture, and the agreement with the topologically defined Schubert classes in the smooth case. We prove some special cases of these conjectures.
\end{abstract}

\maketitle

\section{Introduction} Modern Schubert calculus has been mostly concerned with the equivariant singular cohomology and $K$-theory (as well as their quantum deformations) of generalized flag manifolds $G/B$, where $G$ is a connected complex semisimple Lie group and $B$ a Borel subgroup; Kac-Moody flag manifolds have also been studied, but we will restrict ourselves here to the finite case. The basic results in Schubert calculus for other oriented cohomology theories have only been obtained recently in \cite{CPZ, CZZ,CZZ1,CZZ2,hhhcge,HK,kakecf}. After this main theory has been developed, the next step is to give explicit formulas, thus generalizing well-known results in singular cohomology and $K$-theory, which are usually based on combinatorial structures. 

A first contribution in this direction is our paper \cite{LZ}. This focuses on an oriented cohomology corresponding to a singular cubic curve (in Weierstrass form), more precisely, to a singular elliptic formal group law (called hyperbolic), which we view as the first interesting case after $K$-theory in terms of complexity. (The correspondence between generalized cohomology theories and formal group laws is explained below.) The main result was concerned with extending the combinatorial formulas for localizations of Schubert classes in (torus equivariant) cohomology and $K$-theory, which are due to Billey \cite{Bi99} and Graham-Willems \cite{graekt,wilcke}, respectively. 

The main difficulty beyond $K$-theory is the fact that the topologically defined cohomology classes corresponding to a Schubert variety depend on its chosen Bott-Samelson desingularization; this is why they are called Bott-Samelson classes. Thus, these classes depend on a reduced word for the given Weyl group element, which is not the case in ordinary cohomology and $K$-theory, where we have naturally defined Schubert classes. In this paper we consider the problem of defining Schubert classes independently of a reduced word. We focus on the singular elliptic (hyperbolic) case only and we base our construction on the Kazhdan-Lusztig basis of a corresponding Hecke algebra, so we use the term Kazhdan-Lusztig (KL) Schubert classes. 

We chose to work in the equivariant setting, because of a positivity conjecture in this case, namely \cite[Conjecture~6.4]{LZ}, whereas such a property does not hold in the non-equivariant case. We conjecture the same positivity property for our KL-Schubert classes. Moreover, in the equivariant case we can use the GKM model for the corresponding cohomology of the flag variety, and we have an easy formula for the topologically defined Schubert classes in the smooth case. We conjecture that our KL-Schubert classes coincide with the latter in the smooth case. We prove some special cases of these conjectures.

\section{Background}\label{background} We briefly recall the main results in Schubert calculus for generalized cohomology theories.

\subsection{Complex oriented cohomology theories} A (one dimensional, commutative) {\em formal group law} over a commutative ring $R$ is a formal power series $F(x,y)=\sum_{i,j}a_{ij}x^iy^j$ in $R[[x,y]]$ satisfying (see \cite[p.4]{levmor-book})
\begin{equation} F(x,y)=F(y,x)\,,\;\;\;\;\;F(x,0)=x\,,\;\;\;\;\;F(x,F(y,z))=F(F(x,y),z)\,.\end{equation}
The {\em formal inverse} is the power series $\iota(x)$ in $R[[x]]$ defined by $F(x,\iota(x))=0$. 

Let $E^*(\cdot)$ be a complex oriented cohomology theory with base ring $R=E^*(\mbox{pt})$. By \cite{Qu71}, this is equipped with a formal group law $F(x,y)$ over $R$, which expresses the corresponding first Chern class, denoted $c(\cdot)$, of a tensor product of two line bundles ${\mathcal L}_1$ and ${\mathcal L}_2$ on a space $X$ in terms of $c({\mathcal L}_1)$ and $c({\mathcal L}_2)$: 
\begin{equation}\label{defF} c({\mathcal L}_1\otimes {\mathcal L}_2)=F(c({\mathcal L}_1),\,c({\mathcal L}_2))\,.\end{equation}

The reciprocal of Quillen's statement is false: there are formal group laws which do not come from  complex oriented cohomology theories. 
However, if one translates and extends the axiomatics of oriented theories 
into the algebraic context, which was done
by Levine and Morel in \cite{levmor-book}, then to any formal group law one can associate (by tensoring with algebraic cobordism over the Lazard ring) the respective \textit{algebraic} oriented cohomology theory. In this paper we will work in the more general algebraic setting, so $E^*(\cdot)$ stands for
the respective algebraic oriented cohomology.

We refer to the (finite type) generalized flag variety $G/B$, and we let $T$ be the corresponding maximal torus. We use freely the corresponding root system terminology. As usual, we denote the set of roots by $\Phi$, the subsets of positive and negative roots by $\Phi^+$ and $\Phi^-$, the simple roots and corresponding simple reflections by $\alpha_i$ and $s_i$ (for $i=1,\ldots,n$, where $n$ is the rank of the root system), the lattice of integral weights by $\Lambda$, the Weyl group by $W$, its longest element by $w_\circ$, and its strong Bruhat order by $\le$. For each $w\in W$, we have the corresponding Schubert variety $X(w):=\overline{BwB/B}$. Given an arbitrary weight $\lambda$, let $\mathcal{L}_\lambda$ be the corresponding line bundle over $G/B$, that is, $\mathcal{L}_\lambda:= G\times_B {\mathbb C}_{-\lambda}$, where $B$ acts on $G$ by right multiplication, and the
$B$-action on ${\mathbb C}_{-\lambda}={\mathbb C}$ corresponds to the character determined by $-\lambda$. 

We now consider the respective $T$-equivariant cohomology $E_T^*(\cdot)$ of spaces with a $T$-action, see e.g. \cite{hhhcge}. Its base ring $E_T^*(\mbox{pt})$ can be identified (after completion) with the {\em formal group algebra} 
\begin{equation}\label{defS}
S:=R[[y_\lambda]]_{\lambda\in \Lambda}/(y_0,\;
y_{\lambda+\nu}-F(y_{\lambda},y_{\nu}))
\end{equation}
of \cite[Def.~2.4]{CPZ}; in other words, $y_\lambda$ is identified with the corresponding first Chern class of ${\mathcal L}_\lambda$, cf. \eqref{defF}. The Weyl group acts on $S$ by $w(y_\lambda):=y_{w\lambda}$. 

The universal formal group corresponds to complex cobordism. In this paper we focus on cohomology, denoted $\HY^*(\cdot)$, whose corresponding formal group law is that of a singular cubic curve in Weierstrass form.
More precisely, such a curve
is given by $y^2+\mu_1 xy=x^3+\mu_2x^2$ \cite[Ch.III, App.A]{Si}.  (Observe that such a curve is always rational and has either a cusp or a node singularity.) The respective formal group law, called hyperbolic, is  (see \cite[Ch.IV]{Si}, \cite[Ex. 63]{BB10} and  \cite[Cor. 2.8]{BB11}
\begin{equation}\label{defell}
F_h(x,y)=\frac{x+y-\mu_1 xy}{1+\mu_2xy}\,,\qquad\mbox{defined over $R=\Z[\mu_1,\mu_2]$}\,.
\end{equation}
We let $u:=a_{12}=-\mu_2$. 

The hyperbolic formal group law is, in fact, a very natural one from a topological perspective. 
Via the one-to-one correspondence between genera and formal group laws it corresponds to a famous 2-parameter Hirzebruch genus $T_{x,y}$ (see \cite{Kr74}, where $x-y=\mu_1$ and $xy=\mu_2$). Observe that the genus $T_y=T_{1,y}$ appears in Hirzebruch's celebrated book \cite{Hir} 
by the name ``virtual generalized Todd genus''.  

Let us also mention some important special cases. 
In the trivial case $\mu_1=\mu_2=0$, $F_h(x,y)=F_a(x,y)$ is the additive group law, which corresponds to ordinary singular cohomology (or Chow group) $H^*(\cdot)$. If $\mu_2=0$, and $\mu_1=1$, resp. $\mu_1$ is not invertible, then $F_h(x,y)=F_m(x,y)$ is the corresponding version of the multiplicative formal group law, associated to $K$-theory, and connective $K$-theory, respectively.

\subsection{Schubert and Bott-Samelson classes}\label{strucetgb} Given a Weyl group element $w$, consider a reduced word ${I_w}=(i_1,\ldots,i_l)$ for it, so $w=s_{i_1}\ldots s_{i_l}$. There is a {\em Bott-Samelson resolution} of the corresponding Schubert variety $X(w)$, which we denote by $\gamma_{I_w}:\Gamma_{I_w}\rightarrow X(w)\hookrightarrow G/B$. This determines a so-called {\em Bott-Samelson class} in  $E^*_T(G/B)$ via the corresponding pushforward map, namely $(\gamma_{I_w})_{!}(1)$. Here we let 
\begin{equation}\label{zetaw}\zeta_{I_w}:=(\gamma_{I_w^{-1}})_{!}(1)\,,\end{equation}
 where $I_w^{-1}:=(i_l,\ldots,i_1)$ is a reduced word for $w^{-1}$; we use $I_{w^{-1}}$, rather than $I_w$, for technical reasons. Note that $\zeta_{\emptyset}$ is the class of a point (where $\emptyset$ denotes the reduced word for the identity).

By \cite[Thm.~3.7]{BE} the Bott-Samelson classes are independent of the corresponding reduced words only for cohomology and $K$-theories (we can say that connective $K$-theory is the ``last'' case when this happens). In these cases, the Bott-Samelson classes are the {\em Schubert classes}, and they form bases of $H_T^*(G/B)$ and $K_T(G/B)$ over the corresponding formal group algebra $S$, as $w$ ranges over $W$. (More precisely, the Schubert classes are the Poincar\'e duals to the fundamental classes of Schubert varieties in homology, whereas in $K$-theory they are the classes of {\em structure sheaves} of Schubert varieties.) More generally, an important result in generalized cohomology Schubert calculus says that, by fixing a reduced word $I_w$ for each $w$, the corresponding Bott-Samelson classes $\{\zeta_{I_w}\,:\,w\in W\}$ form an $S$-basis of $E_T^*(G/B)$. 

So we have the following diagram of oriented theories, respective formal group laws and specialization maps.
\[
\xymatrix{
 & *\txt{ {\small Algebraic cobordism}  \\ {\small universal f.g.l.}} \ar[d] & \\
 & *\txt{ {\small Elliptic cohomology}  \\ {\small f.g.l. of a (non-singular) elliptic curve}} \ar[d] & \\
 & *+[F]\txt{ $\HY^*(-)$ \\ {\small hyperbolic f.g.l.} \\ $F_h(x,y)=\tfrac{x+y-\mu_1 xy}{1+\mu_2xy}$} \ar[d] & \\
*\txt{{\small singular cohomology} \\ $H^*(-)$ \\ {\small additive f.g.l.} \\ $F_a(x,y)=x+y$ } & *\txt{ {\small connective $k$-theory} \\ $k^*(-)$ \\ {\small multiplicative f.g.l.} \\ $F(x,y)=x+y-\mu_1 xy$}  \ar[l] \ar[r] \ar@{}[d]|(.6){independent\; of \; reduced \; decompositions} & *\txt{{\small Grothendieck $K_0$} \\ $K^*(-)$  \\ {\small multiplicative periodic f.g.l.} \\ $F_m(x,y)=x+y-xy$} \\
 & &
\save "4,1" . "4,3"*+<6ex>[F.]\frm{} \restore
}
\]

There is a well-known model for $E_T^*(G/B)$ known as the {\em Borel model}, which we now describe. We start by considering the invariant ring 
\[S^W:=\{f\in S\,:\, wf=f \mbox{ for all $w\in W$}\}\,.\]
We then consider the coinvariant ring
\[S\otimes_{S^W} S:=\frac{S\otimes_R S}{\langle f\otimes 1-1\otimes f\,:\, f\in S^W\rangle}\,.\]
Here the product on $S\otimes_{S^W} S$ is given by $(f_1\otimes g_1)(f_2\otimes g_2):=f_1f_2\otimes g_1 g_2$. To more easily keep track of the left and right tensor factors, we set $x_\lambda:=1\otimes y_\lambda$ and $y_\lambda:=y_\lambda\otimes 1$. We use this convention whenever we work with a tensor product of two copies of $S$; by contrast, when there is a single copy of $S$ in sight, we let $x_\lambda=y_\lambda$. 
 
We are now ready to state a second important result in generalized cohomology Schubert calculus, namely that $S\otimes_{S^W} S$ is a rational model for $E_T^*(G/B)$, as an $S$-module; here the action of $y_\lambda\in S$ is on the left tensor factor, as the above notation suggests. 
Observe that, in general, $E_T^*(G/B)$ and  $S\otimes_{S^W} S$ are not isomorphic integrally (see \cite[Thm.~11.4]{CZZ}).

\subsection{The formal Demazure algebra}\label{fda} Following \cite[\S6]{HMSZ}, \cite[\S5]{CZZ} and \cite[\S3]{CZZ1}, consider the
localization $Q$ of $S$ along all $x_\alpha$, for $\alpha\in \Phi$ (note the change of notation, from $y_\lambda$ to $x_\lambda$, cf. the above convention),
and define the {\em twisted group algebra} $Q_W$  to be the smash product
$Q\# R[W]$, see \cite[Def.~6.1]{HMSZ}. More precisely, as an $R$-module, $Q_W$ is $Q\otimes_R R[W]$, while the
multiplication is given by
\begin{equation}\label{commrel}
q\delta_w \cdot q'\delta_{w'}=q(wq')\delta_{ww'},\qquad q,q'\in Q,\;\;
w,w'\in W.
\end{equation}

For simplicity, we denote
$\delta_i:=\delta_{s_i}$, $x_{\pm i}:=x_{\pm\alpha_i}$, and $x_{\pm i\pm j}:=x_{\pm\alpha_i\pm\alpha_j}$, for $i,j\in\{1,\ldots,n\}$; similarly for the $y$ variables.
Following \cite[Def.~6.2]{HMSZ} and \cite[\S3]{CZZ1}, for each $i=1,\ldots, n$,
we define in $Q_W$
\begin{equation}\label{defxy}
X_i:=\frac{1}{x_i}\delta_i-\frac{1}{x_i}=\frac{1}{x_i}(\delta_i-1),\qquad
Y_i:=X_i+\kappa_i=\frac{1}{x_{-i}}+\frac{1}{x_i}\delta_i=(1+\delta_i)\frac{1}{x_{-i}}\,,
\end{equation}
where $\kappa_i:=\frac{1}{x_{-i}}+\frac{1}{x_i}$. We call $X_i$ and $Y_i$ the {\em Demazure} and the {\em push-pull element}, respectively. 
The $R$-algebra ${\mathcal D}_F$ generated by multiplication with elements of $S$ and the
elements $\{X_i\}$, or 
$\{Y_i\}$, is called the {\em formal affine Demazure algebra}. Observe that its dual ${\mathcal D}_F^\star$ serves as an integral model for $E_T^*(G/B)$.

The algebras $Q_W$ and ${\mathcal D}_F$ act on $S\otimes_R Q$ by
\begin{equation}\label{action}h(f\otimes g)=f\otimes hg\;\;\;\;\;\mbox{and}\;\;\;\;\;\delta_w(f\otimes g):=f\otimes wg \,,\end{equation}
where $f\in S$, $g,h\in Q$, and $w\in W$. In fact, the Demazure and push-pull elements act on $S\otimes_R S$ and on the Borel model $S\otimes_{S^W} S$; we will focus on the latter action.


\begin{rem}\label{symmi} It is clear from definition that $Y_i$ is $S^{\langle s_i\rangle}$-linear. Moreover, if $\delta_i(f)=f$, i.e., $f$ is $s_i$-invariant, then $Y_i\,f=\kappa_i f$. It is easy to show (see, e.g., \eqref{fglcalc} below) that, for the hyperbolic formal group law, we have $\kappa_i=\mu_1$.
\end{rem}

Relations in the algebra ${\mathcal D}_F$ were given in \cite[Thm.~6.14]{HMSZ} and \cite[Prop.~8.10]{HMSZ}. In particular, in the case of the hyperbolic formal group law, we have the relations below:
\begin{itemize}
\item[(a)] For all $i$, we have (cf. Remark \ref{symmi})
\begin{equation}\label{square}
Y_i^2=\mu_1 Y_i\,.\end{equation}  
\item[(b)] If $\langle\alpha_i,\alpha_j^\vee\rangle=0$, so that $m_{ij}=2$, where $m_{ij}$ be the order of $s_is_j$ in $W$, then 
\begin{equation}\label{braid2}Y_iY_j=Y_jY_i\,.
\end{equation}
\item[(c)] If $\langle\alpha_i,\alpha_j^\vee\rangle=\langle\alpha_j,\alpha_i^\vee\rangle=-1$, so that $m_{ij}=3$ (i.e., in type $A_2$), we have 
\begin{equation}\label{braid3}Y_iY_jY_i-Y_jY_iY_j=\mu_2(Y_j-Y_i)=u(Y_i-Y_j)\,.\end{equation}
\item[(d)] If $m_{ij}=4$ (i.e., in type $B_2$), we have
\begin{equation}\label{braid4}Y_iY_jY_iY_j-Y_jY_iY_jY_i=2u(Y_iY_j-Y_jY_i)\,.\end{equation}
\item[(e)] If $m_{ij}=6$ (i.e., in type $G_2$), we have
\begin{equation}\label{braid6}Y_iY_jY_iY_jY_iY_j-Y_jY_iY_jY_iY_jY_i=4u(Y_iY_jY_iY_j-Y_jY_iY_jY_i)-3u^2(Y_iY_j-Y_jY_i)\,.\end{equation}
\end{itemize}
The relations in (c), (d), (e) are called {\em twisted braid relations}; they were derived in \cite[Example~4.12]{LNZ} from the general relations in \cite[Prop.~6.8]{HMSZ}, cf. also Example~\ref{b2g2}. In the case of ordinary cohomology and $K$-theory,  the twisted braid relations are the usual braid relations (since $\mu_2=0$).

Given a reduced word $I_w=(i_1,\ldots,i_l)$ for $w\in W$, define $X_{I_w}:=X_{i_1}\ldots X_{i_l}$ and $Y_{I_w}:=Y_{i_1}\ldots Y_{i_l}$. 
By \cite[Cor.~3.4]{CZZ1}, if we fix a
reduced word $I_w$ for each $w\in W$, then $\{X_{I_w}\,:\,w\in W\}$ and $\{Y_{I_w}\,:\,w\in W\}$ are bases of the free left
$Q$-module $Q_W$. Note that, in cohomology and $K$-theory, $X_{I_w}$ and $Y_{I_w}$ do not depend on the choice of the reduced word $I_w$, so we can simply write $X_w$ and $Y_w$. In fact, we will use the latter notation whenever we are in a similar situation.

A fundamental result in generalized cohomology Schubert calculus states that the Bott-Samelson classes $\zeta_{I_w}$, for ${I_w}=(i_1,\ldots,i_l)$, can be calculated recursively as follows (via the usual action of ${\mathcal D}_F$ on the {\em Borel model} of $E_T^*(G/B)$ in \eqref{action}):
\begin{equation}\label{pushpull}
\zeta_{I_w}=Y_{i_l}\ldots Y_{i_1}\,\zeta_{\emptyset}\,.
\end{equation}

\subsection{The GKM model of equivariant cohomology}\label{gkmsetup}
In the GKM model, see e.g. \cite{gkmeck,hhhcge,CZZ2}, we embed $E_T^*(G/B)$ into $\bigoplus_{w\in W}S$, with pointwise multiplication. 
This comes from the embedding 
\begin{equation}\label{gkmemb}i^*\,:\,E_T^*(G/B)\rightarrow\bigoplus_{w\in W}E_T^*({\rm{pt}})\simeq\bigoplus_{w\in W}S\,,\end{equation}
 where 
\begin{equation}\label{defi}i^*:=\bigoplus_{w\in W} i_w^*\,,\;\;\;\;\mbox{and}\;\;\;\;i_w\,:\,\mbox{pt}\rightarrow G/B\,,\:\mbox{ with $\mbox{pt}\mapsto w^{-1}$}\,.\end{equation}
There is a characterization of the image of this embedding. We denote the elements of $\bigoplus_{w\in W}S$ by $(f_w)_{w\in W}$; alternatively, we view them as functions $f:W\to S$. 

Using the Borel model for $E_T^*(G/B)$, we can realize the  GKM map $i^*$ in \eqref{gkmemb} as an embedding of $S\otimes_{S^W} S$ into $\bigoplus_{w\in W} S$. This map can be made explicit as
\begin{equation}\label{inc2}f\otimes g\mapsto (f\cdot(w g))_{w\in W}\,.\end{equation}
Via this map, the action \eqref{action} of the algebras $Q_W$ and ${\mathcal D}_F$ is translated as follows in the GKM model:
\begin{equation}\label{actiongkm} x_\lambda\cdot 1=(y_{w\lambda})_{w\in W}\,,\qquad \delta_v\,(f_w)_{w\in W}=(f_{wv})_{w\in W}\,.\end{equation}

As now the action of the push-pull operators $Y_i$ is made explicit, we can use \eqref{pushpull} to compute recursively the Bott-Samelson classes $\zeta_{I_w}$ in the GKM model, once we know the class $\zeta_{\emptyset}$. This is given by
\begin{equation}\label{topclass}
(\zeta_{\emptyset})_w=\casetwo{\prod_{\alpha\in\Phi^+}y_{-\alpha}}{w={\rm id}}{0}{w\ne{\rm id}}
\end{equation}
In fact, the following more general result holds:
\begin{equation}\label{topclassgen}
(\zeta_{I_v})_w=\casetwo{\prod_{\alpha\in \Phi^+\cap w\Phi^+}y_{-\alpha}}{v=w}{0}{w\not\le v}
\end{equation} 

We now calculate the hyperbolic Bott-Samelson classes in the rank 2 cases $A_2$ and $C_2$; note that the root system $B_2$ is the same as $C_2$, so it suffices to consider the latter. 

\begin{example}\label{bsa2} We start with type $A_2$. We use the notation $[ij]:=y_{-\alpha_{ij}}$, for the root $\alpha_{ij}:=\varepsilon_i-\varepsilon_j$. We use the following representation of the (right weak Bruhat order on the) symmetric group $S_3$. 
{\tiny
\[\xymatrix{
  &\rm{id} \ar@{->}[dr] \ar@{->}[dl] \\
 s_1 \ar@{->}[d] & &s_2 \ar@{->}[d]  \\
 s_1s_2 \ar@{->}[dr] & &s_2s_1 \ar@{->}[dl] \\
 &s_1s_2s_1 
 }
\]}
Based on \eqref{topclass}, \eqref{inc2}, \eqref{actiongkm}, and \eqref{eq1lem0} below (with $\alpha=\alpha_{12}$ and $\beta=\alpha_{23}$, so $\alpha+\beta=\alpha_{13}$), we calculate the values of $\zeta_{\emptyset}$, $\zeta_{1}$, $\zeta_{1,2}$, and $\zeta_{1,2,1}$ on the elements of $S_3$.
\begin{align} &\xymatrix{
  & [12][13][23] \ar@{->}[dr] \ar@{->}[dl] \\
 0 \ar@{->}[d] & &  0 \ar@{->}[d]  & \stackrel{Y_1}{\longrightarrow}\qquad\\
 0 \ar@{->}[dr] & & 0 \ar@{->}[dl] \\
 & 0
 }
\xymatrix{
  & [13][23] \ar@{->}[dr] \ar@{->}[dl] \\
 [13][23] \ar@{->}[d]  & &  0 \ar@{->}[d]  & \stackrel{Y_2}{\longrightarrow}\\
0  \ar@{->}[dr] & &  0\ar@{->}[dl] \\
 & 0
 }
\nonumber \\[2mm]
&\xymatrix{
  &  [13] \ar@{->}[dr] \ar@{->}[dl] \\
 [23] \ar@{->}[d]  & & [13]  \ar@{->}[d] & \stackrel{Y_1}{\longrightarrow}\qquad\\
[23]  \ar@{->}[dr] & & 0 \ar@{->}[dl] \\
 & 0
 }
\xymatrix{
  & 1+u[13][23] \ar@{->}[dr] \ar@{->}[dl] \\
 1+u[13][23] \ar@{->}[d]  & & 1  \ar@{->}[d]  \\
 1 \ar@{->}[dr] & & 1 \ar@{->}[dl] \\
 & 1
 }
\label{calcs3}
\end{align}
Similarly, we calculate $\zeta_{2,1,2}$.
\[\xymatrix{
  & 1+u[12][13] \ar@{->}[dr] \ar@{->}[dl] \\
 1 \ar@{->}[d]  & & 1+u[12][13]  \ar@{->}[d]  \\
 1 \ar@{->}[dr] & & 1 \ar@{->}[dl] \\
 & 1
 }\]
\end{example}

\begin{example}\label{bsc2} We consider type $C_2$ with the simple roots $\alpha_0:=2\varepsilon_1$ and $\alpha_1:=\varepsilon_2-\varepsilon_1$. We use the notation $[ij]:=y_{-(\varepsilon_j-\varepsilon_i)}$, for $i\ne j$ in $\{\pm 1,\,\pm 2\}$, where $\overline{\imath}:=-i$ and $\varepsilon_{\overline{\imath}}:=-\varepsilon_i$; in particular, $[\overline{\imath}i]:=y_{-2\varepsilon_i}$ for $i\in\{1,2\}$ and $[\overline{1}2]:=y_{-(\varepsilon_1+\varepsilon_2)}$. We use the following representation of the (right weak Bruhat order on the) hyperoctahedral group $B_2$. 
{\tiny
\[\xymatrix{
  &\rm{id} \ar@{->}[dr] \ar@{->}[dl] \\
 s_0 \ar@{->}[d] & &s_1 \ar@{->}[d]  \\
 s_0s_1 \ar@{->}[d] & &s_1s_0 \ar@{->}[d] \\
s_0s_1s_0 \ar@{->}[dr] & &s_1s_0s_1 \ar@{->}[dl] \\
 &s_0s_1 s_0s_1 
 }
\]}
Based on \eqref{topclass}, \eqref{inc2}, \eqref{actiongkm}, and \eqref{eq1lem0} below, we calculate the values of $\zeta_{\emptyset}$, $\zeta_{0}$, $\zeta_{0,1}$, $\zeta_{0,1,0}$, and $\zeta_{0,1,0,1}$ on the elements of the group $B_2$.
\begin{align} &
\xymatrix{
  &  [12][\overline{1}2][\overline{1}1][\overline{2}2]  \ar@{->}[dr] \ar@{->}[dl] \\
   0   \ar@{->}[d] & &  0   \ar@{->}[d]  \\
   0   \ar@{->}[d] & &  0   \ar@{->}[d] & \stackrel{Y_0}{\longrightarrow}\qquad \\
   0   \ar@{->}[dr] & &  0  \ar@{->}[dl] \\
 & 0
 }
\xymatrix{
  &   [12][\overline{1}2][\overline{2}2]  \ar@{->}[dr] \ar@{->}[dl] \\
   [12][\overline{1}2][\overline{2}2]   \ar@{->}[d] & &  0   \ar@{->}[d]  \\
   0   \ar@{->}[d] & &  0   \ar@{->}[d] & \stackrel{Y_1}{\longrightarrow}\qquad \\
   0   \ar@{->}[dr] & &  0  \ar@{->}[dl] \\
 & 0
 }\label{calcc2}
\end{align}
\begin{align*}
&\xymatrix{
  &   [\overline{1}2][\overline{2}2]  \ar@{->}[dr] \ar@{->}[dl] \\
  [{1}2][\overline{2}2]    \ar@{->}[d] & &  [\overline{1}2][\overline{2}2]   \ar@{->}[d]  \\
     [{1}2][\overline{2}2]  \ar@{->}[d] & &  0   \ar@{->}[d] & \!\!\!\!\!\stackrel{Y_0}{\longrightarrow} \;\;\\
   0   \ar@{->}[dr] & &  0  \ar@{->}[dl] \\
 & 0
 }
\xymatrix{
  &  [\overline{2}2]+u[1,2][\overline{1}2][\overline{2}2]   \ar@{->}[dr] \ar@{->}[dl] \\
   [\overline{2}2]+u[1,2][\overline{1}2][\overline{2}2]   \ar@{->}[d] & &   [\overline{1}2]  \ar@{->}[d]  \\
   [1,2]   \ar@{->}[d] & &  [\overline{1}2]   \ar@{->}[d] \\
   [1,2]   \ar@{->}[dr] & &  0  \ar@{->}[dl] \\
 & 0
 }\end{align*}
\begin{align*} 
&\xymatrix{
&&  &  1+2u[\overline{1}2][\overline{2}2]   \ar@{->}[dr] \ar@{->}[dl] \\
&&   1+2u[{1}2][\overline{2}2]   \ar@{->}[d] & & 1+2u[\overline{1}2][\overline{2}2]    \ar@{->}[d] \\
&\stackrel{Y_1}{\longrightarrow}\!\!  &  1+2u[{1}2][\overline{2}2]   \ar@{->}[d] & & 1   \ar@{->}[d] \\
&  &  1  \ar@{->}[dr] & &  1  \ar@{->}[dl] \\
& & & 1
 }
\end{align*}
More explicitly, to calculate $(\zeta_{0,1,0})_{\rm id}=(\zeta_{0,1,0})_{s_0}$, we use \eqref{eq1lem0} with $\alpha=2\varepsilon_1$ and $\beta=\varepsilon_2-\varepsilon_1$, so $\alpha+\beta=\varepsilon_1+\varepsilon_2$. Then, to calculate $(\zeta_{0,1,0,1})_{\rm id}=(\zeta_{0,1,0,1})_{s_1}$, we use \eqref{eq1lem0} with $\alpha=\varepsilon_2-\varepsilon_1$ and $\beta=\varepsilon_1+\varepsilon_2$, so $\alpha+\beta=2\varepsilon_2$; similarly for $(\zeta_{0,1,0,1})_{s_0}=(\zeta_{0,1,0,1})_{s_0s_1}$, but with $\alpha$ and $\beta$ switched. 

On another hand, $\zeta_1$, $\zeta_{1,0}$, and $\zeta_{1,0,1}$ are computed as follows:
\begin{align} &
\xymatrix{
&&  &   [\overline{1}2][\overline{1}1][\overline{2}2]  \ar@{->}[dr] \ar@{->}[dl] \\
&&   0   \ar@{->}[d] & &  [\overline{1}2][\overline{1}1][\overline{2}2]  \ar@{->}[d]  \\
&\zeta_{\emptyset}\;\;\;\stackrel{Y_1}{\longrightarrow}\!\!\!\!  &   0   \ar@{->}[d] & &  0   \ar@{->}[d] & \!\!\!\!\!\!\!\!\stackrel{Y_0}{\longrightarrow}\; \\
&&   0   \ar@{->}[dr] & &  0  \ar@{->}[dl] \\
&& & 0
 }
\xymatrix{
  &   [\overline{1}2][\overline{2}2]  \ar@{->}[dr] \ar@{->}[dl] \\
  [\overline{1}2][\overline{2}2]    \ar@{->}[d] & &  [\overline{1}2][\overline{1}1]   \ar@{->}[d]  \\
     0  \ar@{->}[d] & &   [\overline{1}2][\overline{1}1]  \ar@{->}[d] & \\
   0   \ar@{->}[dr] & &  0  \ar@{->}[dl] \\
 & 0
 }\label{calcc21}
\end{align}
\begin{align*} 
&\xymatrix{
&&  &  2[\overline{1}2]-[\overline{1}2]^2+u[\overline{1}2]^2([\overline{1}1]+[\overline{2}2])   \ar@{->}[dr] \ar@{->}[dl] \\
&&   [\overline{2}2]   \ar@{->}[d] & & 2[\overline{1}2]-[\overline{1}2]^2+u[\overline{1}2]^2([\overline{1}1]+[\overline{2}2])  \ar@{->}[d] \\
&\stackrel{Y_1}{\longrightarrow}\!\!  &  [\overline{2}2]   \ar@{->}[d] & &  [\overline{1}1]  \ar@{->}[d] \\
&  &  0  \ar@{->}[dr] & &  [\overline{1}1]  \ar@{->}[dl] \\
& & & 0
 }
\end{align*}
More explicitly, to calculate $(\zeta_{1,0,1})_{\rm id}=(\zeta_{1,0,1})_{s_1}$, we use \eqref{eq2lem0} with $\alpha=\varepsilon_2-\varepsilon_1$ and $\beta=2\varepsilon_1$, so $2\alpha+\beta=2\varepsilon_2$. 

A more involved computation, based on the expression for $\zeta_{1,0,1}$ in \eqref{calcc21}, leads to the following expression for $\zeta_{1,0,1,0}$:
\[\xymatrix{
  &   1+2u[\overline{1}2][\overline{2}2]  \ar@{->}[dr] \ar@{->}[dl] \\
  1+2u[\overline{1}2][\overline{2}2]    \ar@{->}[d] & &  1+2u[\overline{1}2][\overline{1}1]   \ar@{->}[d]  \\
     1  \ar@{->}[d] & &   1+2u[\overline{1}2][\overline{1}1]  \ar@{->}[d] & \\
   1   \ar@{->}[dr] & &  1 \ar@{->}[dl] \\
 & 1
 }
\]
Note that we need to apply four times the formula for $y_{\alpha+\beta}$ in \eqref{fglcalc} only to calculate $(\zeta_{1,0,1,0})_{\rm id}=(\zeta_{1,0,1,0})_{s_0}$.
\end{example}

\begin{example}\label{bsa3} We now consider type $A_3$ and, for simplicity, we set $\mu_1=0$ in \eqref{defell}, i.e., we consider the {\em Lorentz formal group law}. We show $(\zeta_{I_{w_\circ}})_{\rm{id}}$ for some reduced words for $w_\circ=4321$:
\begin{align*}
&(\zeta_{1,2,3,1,2,1})_{\rm{id}}=(\zeta_{1,2,1,3,2,1})_{\rm{id}}=1+2u[14][24]+u^2[13][14][23][24]\,,\\
&(\zeta_{1,2,3,2,1,2})_{\rm{id}}=(\zeta_{2,1,2,3,2,1})_{\rm{id}}=1+u[13][14]+u[14][24]+u^2[13][14][24][34]\,.
\end{align*}
\end{example}

\section{Main result}

An important open problem in Schubert calculus beyond $K$-theory is defining Schubert classes which are independent of a reduced word for the indexing Weyl group element. The standard topological approach works if the Schubert variety $X(w)$ is smooth, and the corresponding class $[X(w)]$ has a simple formula; namely, by \cite[Theorem~7.2.1]{BL}, in the GKM model of $E_T^*(G/B)$ (discussed in Section~\ref{gkmsetup}), we have:
\begin{equation}\label{smooth}
[X(w)]_v=\frac{\displaystyle{\prod_{\beta\in\Phi^+}y_{-\beta}}}{\displaystyle{\prod_{\stackrel{\beta\in \Phi^+}{s_\beta v\le w}} y_{-\beta} }}\,,  
\end{equation}
for $v\le w$ in the Weyl group $W$, and otherwise $[X(w)]_v=0$, cf. \eqref{topclassgen}. 

We propose an approach in elliptic cohomology based on the {\em Kazhdan-Lusztig basis} of the corresponding {\em Hecke algebra}, and provide support for it. 

Let us explain the motivation. In cohomology and $K$-theory, by applying the operator $Y_{w_\circ}$ to the class of a point gives the fundamental class of the flag manifold (i.e., the identity element). However, this does not happen beyond $K$-theory, see Examples~\ref{bsa2} and \ref{bsa3}. Our goal is to modify $Y_{I_w}$ in such a way that: (1) the new operator corresponding to $w$ does not depend on a reduced word for $w$; (2) when applied to the class of a point, the operator corresponding to $w_\circ$ gives $1$. 

We start our construction by recalling the formal Demazure algebra ${\mathcal D}_F$ from Section~\ref{fda}. By \linebreak \cite[Thm.~5.4]{HMSZ}, in type $A$, if $F$ is the hyperbolic formal group law, the algebra  ${\mathcal D}_F$ is generated by the elements $Y_i$ and multiplications by $z\in S$ subject to the relations \eqref{square}, \eqref{braid2}, \eqref{braid3}, and the following one:
\begin{equation}
Y_iz=s_i(z)Y_i+\mu_1 z-Y_i(z)\,.
\end{equation}
Following \cite[Def.~5.3]{HMSZ}, let $\mathbf{D}_F$ denote the $R$-subalgebra of ${\mathcal D}_F$ generated by the elements $Y_i$ only. In \cite[Prop.~6.1]{HMSZ} it was shown that for the additive formal group law $F=F_a$ (respectively, the multiplicative one $F=F_m$), the algebra $\mathbf{D}_F$ is isomorphic to the nil-Hecke algebra (respectively the 0-Hecke algebra) of Kostant-Kumar \cite{KK86,KK90}.

We will now recall the Hecke algebra $\mathcal H$ of the Weyl group $W$, and we refer to \cite{hum} for more information. Instead of the usual generators $T_i$, we use those in \cite[Section~1]{Ca}, namely $\tau_i:=tT_i$, where $t=q^{-1/2}$. So $\mathcal H$ is the $\Z[t^{\pm 1}]$-algebra generated by $\tau_i$ subject to 
\begin{equation}\label{hecke}
(\tau_i+t)(\tau_i-t^{-1})=0\;\;\Longleftrightarrow\;\;\tau_i^2=(t^{-1}-t)\tau_i+1\,,
\end{equation}
 and the usual braid relations.

From now on we work in the setup of the hyperbolic formal group law \eqref{defell} with $\mu_1=1$ and $\mu_2=-(t+t^{-1})^{-2}=-u$, so we let the base ring $R$ be $\Z[t^{\pm 1},\,(t+t^{-1})^{-1}]$. This case does not correspond to a complex oriented theory \cite[\S4]{BK91}; however, since we work in the algebraic setup, we will still use the notation $\HY_T^*(G/B)$.

In \cite[Prop.~8.2]{CZZ1} (in type $A$) and in \cite{LNZ} (in arbitrary type), it is shown that ${\mathcal H}\otimes_{\Z[t^{\pm 1}]} R$ is isomorphic to the corresponding algebra $\mathbf{D}_F$ via 
\begin{equation}\label{taui}\tau_i\mapsto (t+t^{-1})Y_i-t\,.\end{equation}
We identify the two algebras, and note that the involution on $\mathcal{H}$ (sending $t\mapsto t^{-1}$
and $\tau_i\mapsto \tau_i^{-1}$) corresponds to the
involution on $\mathbf{D}_F$ obtained by extending the involution
$t\mapsto t^{-1}$ on the coefficient ring. Indeed, each
push-pull element $Y_i=\tfrac{1}{t+t^{-1}}(\tau_i+t)$ is invariant under this involution; see below.

Consider the Kazhdan-Lusztig basis $\{\gamma_w\::\:w\in W\}$ of ${\mathcal H}$, given by 
\begin{equation}\label{klb}\gamma_w=\tau_w+\sum_{v<w}t\,\pi_{v,w}(t)\,\tau_v\,,\end{equation}
where $\pi_{v,w}(t)$ are the {\em Kazhdan-Lusztig polynomials} (in terms of the classical notation, we have $P_{v,w}(t)=t^{-(\ell(w)-\ell(v)-1)}\pi_{v,w}(t)$). Recall that one of its defining properties is its invariance under the above involution. We implicitly use the well-known result of Kazhdan-Lusztig (see, e.g., \cite[Section~6.1]{BL}) that the Schubert variety $X(w)$ is {\em rationally smooth} (which is implied by being smooth) if and only if $P_{v,w}(t)=1$ for all $v\le w$. 

We will also use the iterative construction of the Kazhdan-Lusztig basis, see, e.g., \cite{hum}[Section~7.11]:
\begin{equation}\label{iterkl}\gamma_{w}=\gamma_{s_i}\gamma_{v}-\sum_{\stackrel{z\prec v}{s_iz<z}}\mu(z,v)\,\gamma_z\,,\end{equation}
where $v=s_iw<w$. Here $z\prec v$ means that $z<v$ in Bruhat order and the largest allowable degree $(\ell(v)-\ell(z)-1)/2$ of the Kazhdan-Lusztig polynomial $P_{z,v}(q)$ is attained; furthermore, $\mu(z,v)$ is the coefficient of this leading term. In particular, $\ell(w)-\ell(z)$ is even.

For $w\in W$, denote by $\Gamma_w$ the element of $\mathbf{D}_F$ which corresponds to $\gamma_w$ via the above isomorphism between ${\mathcal H}\otimes_{\Z[t^{\pm 1}]} R$ and $\mathbf{D}_F$. In particular, $\gamma_{s_i}=\tau_i+t$, which explains why $Y_i=\tfrac{1}{t+t^{-1}}\Gamma_{s_i}$ is invariant under the involution. 

\begin{example}\label{norep} Let $w$ be a product of distinct simple reflections $w=s_{i_1}\ldots s_{i_n}$. It is well-known that
\[\gamma_w=\gamma_{s_{i_1}}\ldots\gamma_{s_{i_n}}\,,\;\;\;\;\mbox{so}\;\:(t+t^{-1})^{-\ell(w)}\Gamma_w= Y_w\,;\]
here the notation $Y_w$ indicates that the corresponding product of operators $Y_i$ is also independent of a reduced word for $w$. 
\end{example}

The next examples show the very close relationship between the twisted braid relation in the hyperbolic case and the Kazhdan-Lusztig basis, in the sense that the former expresses the invariance of $(t+t^{-1})^{-\ell(w_\circ)} \Gamma_{w_\circ}$ for $w_\circ$ in the rank 2 Weyl groups (with respect to the two reduced words for $w_\circ$).

\begin{example}\label{a1a2} Consider type $A_2$. 
As we have seen in the previous example:
\[ \Gamma_{s_i}=(t+t^{-1})Y_i\,,\;\;\;\; \Gamma_{s_is_j}=(t+t^{-1})^2Y_iY_j\,.\]
It is also well-known that 
\begin{equation}\label{a2xx}\gamma_{w_\circ}=\gamma_{s_is_js_i}=\gamma_{s_i}\gamma_{s_j}\gamma_{s_i}-\gamma_{s_i}=\gamma_{s_j}\gamma_{s_i}\gamma_{s_j}-\gamma_{s_j}\,.\end{equation}
Therefore, we have
\[\Gamma_{s_is_js_i}=(t+t^{-1})^3(Y_iY_jY_i-u Y_i)=(t+t^{-1})^3(Y_jY_iY_j-u Y_j)\,.\]
Thus, the independence of $(t+t^{-1})^{-3}\Gamma_{w_\circ}$ from a reduced word for $w_\circ=s_1s_2s_1=s_2s_1s_2$ is given by the twisted braid relation \eqref{braid3}.
\end{example}

\begin{example}\label{b2g2} Let us now turn to types $B_2$ and $G_2$, and calculate the corresponding element $\gamma_{w_\circ}$ based on the recursive formula~\eqref{iterkl}. We illustrate this computation in type $G_2$, which is more involved. We observe, in each case, that the equality of the two obtained expressions for $(t+t^{-1})^{-\ell(w_\circ)} \Gamma_{w_\circ}$ is precisely the corresponding twisted braid relation, namely \eqref{braid4} and \eqref{braid6}. 

In type $B_2$ we have
\begin{equation}\label{b2xx}\gamma_{w_\circ}=\gamma_{s_is_js_is_j}=\gamma_{s_i}\gamma_{s_j}\gamma_{s_i}\gamma_{s_j}-2\gamma_{s_i}\gamma_{s_j}\,,\end{equation}
which implies
\[(t+t^{-1})^{-4}\Gamma_{s_is_js_is_j}=Y_iY_jY_iY_j-2u Y_iY_j=Y_jY_iY_jY_i-2u Y_jY_i\,.\]

In type $G_2$ we have
\[\gamma_{w_\circ}=\gamma_{s_is_js_is_js_is_j}=\gamma_{s_i}\gamma_{s_j}\gamma_{s_i}\gamma_{s_j}\gamma_{s_i}\gamma_{s_j}-4\gamma_{s_i}\gamma_{s_j}\gamma_{s_i}\gamma_{s_j}+3\gamma_{s_i}\gamma_{s_j}\,.\]
Indeed, we combine the analogues of \eqref{a2xx} and \eqref{b2xx}, as well as the recursive formulas
\begin{align*}\gamma_{w_\circ}&=\gamma_{s_i}\gamma_{s_js_is_js_is_j}-\mu(s_is_js_is_j,s_js_is_js_is_j)\gamma_{s_is_js_is_j}-\mu(s_is_j,s_js_is_js_is_j)\gamma_{s_is_j}\,,\\
\gamma_{s_js_is_js_is_j}&=\gamma_{s_j}\gamma_{s_is_js_is_j}-\mu(s_js_is_j,s_is_js_is_j)\gamma_{s_js_is_j}-\mu(s_j,s_is_js_is_j)\gamma_{s_j}\,;
\end{align*}
in both relations, the value of the first $\mu$-coefficient is $1$ and of the second one is $0$, due to the well-known fact that the (non-zero) Kazhdan-Lusztig polynomials for dihedral groups are all equal to $1$. 
Like in Example~\ref{a1a2}, we conclude that
\[(t+t^{-1})^{-6}\Gamma_{s_is_js_is_js_is_j}=Y_iY_jY_iY_jY_iY_j-4u Y_iY_jY_iY_j+3u^2Y_iY_j=Y_jY_iY_jY_iY_jY_i-4u Y_jY_iY_jY_i+3u^2Y_jY_i\,.\]
\end{example}

 We generalize the above examples, by giving some information about the expansion of $\Gamma_w$ in a $Y$-basis of $\mathbf{D}_F$. A priori, by combining \eqref{klb} and \eqref{taui}, this expansion contains powers of $t$ in addition to powers of $t+t^{-1}$, but it turns out that a more precise description can be given. 

\begin{prop}\label{combin} Let $I_w$ be a reduced word for $w$. The element $(t+t^{-1})^{-\ell(w)}\,\Gamma_{w}$ of $\mathbf{D}_F$ has an expansion of the form
\[Y_{I_w}+\sum_{\stackrel{v<w}{\ell(w)-\ell(v)\in 2\Z}} c_v\, u^{(\ell(w)-\ell(v))/2}\, Y_{I_v}\,,\]
for some integer coefficients $c_v$, and some reduced subwords $I_v$ of $I_w$. 
\end{prop}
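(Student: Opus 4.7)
I would proceed by strong induction on $\ell(w)$, exploiting the iterative formula \eqref{iterkl} for the Kazhdan--Lusztig basis. Set $\widetilde{\Gamma}_u := (t+t^{-1})^{-\ell(u)}\Gamma_u$ throughout. The base case $\ell(w)\leq 1$ is immediate from $\widetilde{\Gamma}_{\rm id}=1$ and $\widetilde{\Gamma}_{s_i}=Y_i$.

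For the inductive step, pick a left descent $s_i$ of $w$, set $v:=s_iw$ with $\ell(v)=\ell(w)-1$, choose $I_v$ by induction, and set $I_w:=(i,I_v)$. Substituting $\gamma_{s_i}=(t+t^{-1})Y_i$ in \eqref{iterkl} and dividing by $(t+t^{-1})^{\ell(w)}$, together with the identification $u=(t+t^{-1})^{-2}$, yields
\[
\widetilde{\Gamma}_w \;=\; Y_i\,\widetilde{\Gamma}_v \;-\; \sum_{\substack{z\prec v \\ s_iz<z}} \mu(z,v)\,u^{(\ell(w)-\ell(z))/2}\,\widetilde{\Gamma}_z.
\]
The parity $\ell(w)-\ell(z)\in 2\Z$ follows from $\ell(v)-\ell(z)$ being odd for $z\prec v$ (the property noted just after \eqref{iterkl}) combined with $\ell(w)=\ell(v)+1$. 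Now apply the induction hypothesis to $\widetilde{\Gamma}_v$ and each $\widetilde{\Gamma}_z$, and distribute $Y_i$ through the resulting expansion of $\widetilde{\Gamma}_v$. The leading term yields $Y_i\cdot Y_{I_v}=Y_{I_w}$, and the contributions coming from the $\widetilde{\Gamma}_z$-sum are already in the claimed form (with the $u$-parity inherited both from the explicit $u^{(\ell(w)-\ell(z))/2}$ factor and from the inductive expansion of $\widetilde{\Gamma}_z$).

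The main obstacle is to reorganize each remaining product $Y_i\cdot Y_{I_{v'}}$ (for $v'<v$ appearing in the expansion of $\widetilde{\Gamma}_v$) as an integer linear combination $\sum_{u'} d_{u'}\,u^{m_{u'}}\,Y_{I_{u'}}$ with $I_{u'}$ a reduced subword of $I_w$ and $\ell(w)-\ell(u')\in 2\Z$. I would establish this via a sub-induction on $\ell(v')$: if $s_iv'>v'$, then $(i,I_{v'})$ is itself a reduced word for $s_iv'$ and a subword of $I_w$, so $Y_iY_{I_{v'}}=Y_{(i,I_{v'})}$ with no correction; if instead $s_iv'<v'$, the twisted braid relations \eqref{braid3}--\eqref{braid6} (each contributing integer $u$-corrections of even length drop) allow reorganizing $I_{v'}$ into a reduced word beginning with $i$, after which $Y_i^2=Y_i$ (valid since $\mu_1=1$) collapses the leading pair.

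The hardest step will be the sub-case $s_iv'<v'$: the single application of $Y_i^2=Y_i$ drops total weight by $1$ and naively threatens the claimed parity. The resolution requires either a judicious choice of descent $s_i$ at each stage (for instance, prioritizing descents that keep us in the sub-case $s_iv'>v'$ for all relevant $v'$, cf.\ the computations in Examples~\ref{a1a2}--\ref{b2g2}), or showing that such parity-violating contributions are exactly cancelled by the Kazhdan--Lusztig $\mu(z,v)$-terms. A more conceptual alternative route would combine the bar-invariance of $\widetilde{\Gamma}_w$ (inherited from $\Gamma_w$, as each $Y_i$ is bar-invariant) with a suitable weight filtration on $\mathbf{D}_F$ (with $\deg Y_i=1$ and $\deg u=2$) to force each $Y$-coefficient directly into the form $\Z\cdot u^{(\ell(w)-\ell(v))/2}$, bypassing the explicit braid-move bookkeeping altogether.
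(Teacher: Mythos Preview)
Your approach is the same as the paper's: both proceed by induction on $\ell(w)$ using the iterative Kazhdan--Lusztig formula \eqref{iterkl}. The chief difference is organizational. The paper stays entirely at the level of the $\gamma_{s_i}$'s until the very last step: iterating \eqref{iterkl} expresses $\gamma_w$ as a $\mathbb{Z}$-linear combination of products $\gamma_{s_{j_1}}\cdots\gamma_{s_{j_k}}$, and since each application of \eqref{iterkl} either prepends one $\gamma_{s_i}$ (the term $\gamma_{s_i}\gamma_v$) or passes to a correction $\gamma_z$ with $\ell(w)-\ell(z)$ even, all resulting products automatically have $k\equiv\ell(w)\pmod 2$ and integer coefficients. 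Only then does one substitute $\gamma_{s_i}=(t+t^{-1})Y_i$ and divide through by $(t+t^{-1})^{\ell(w)}$, producing the stated $u$-powers. Because the paper never manipulates $Y_i$'s directly, it never invokes $Y_i^2=\mu_1Y_i$ or the twisted braid relations, and so the parity concern you raise in your ``hardest step'' simply does not arise. Your route --- passing to $\widetilde{\Gamma}$ and distributing $Y_i$ through the inductive expansion of $\widetilde{\Gamma}_v$ --- forces you into the quadratic relation $Y_i^2=Y_i$ and the accompanying parity bookkeeping, which is an artifact of the order of operations rather than a genuine obstacle.

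That said, the paper's proof is extremely terse. Its assertion that ``we can arrange things such that all these products correspond to \emph{reduced} subwords of $I_w$'' hides precisely the difficulty you isolate: when $s_iv'<v'$ for some $v'$ appearing in the inductive expansion of $\gamma_v$, the prepended word $(i,I_{v'})$ is not reduced, and the paper does not explain how this is circumvented. So while the paper's formulation cleanly delivers the integer-coefficient and parity claims (and these are all that Corollaries~\ref{schubcl} and the basis corollary actually need), the reducedness assertion is left at essentially the same level of justification as in your proposal.
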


\begin{proof} Iterating \eqref{iterkl}, we express any $\gamma_w$ in terms of sums of products of $\gamma_{s_i}$. Note that we can arrange things such that all these products correspond to reduced subwords of $I_w$, and also that the parity of the number of factors in them is the same. The result follows by substituting $\Gamma_{s_i}=(t+t^{-1})Y_i$, and by dividing through by $(t+t^{-1})^{\ell(w)}$. 
\end{proof}

Proposition \ref{combin} suggests the following definition for our Schubert classes, which we call {\em Kazhdan-Lusztig (KL-) Schubert classes}.

\begin{defi}
Consider the element ${\mathfrak S}_w$ in $\HY_T^*(G/B)$ given by $(t+t^{-1})^{-\ell(w)}\,\Gamma_{w^{-1}}(\zeta_{\emptyset})$ under the action of $\mathbf{D}_F$ on the GKM model of $\HY_T^*(G/B)$.
\end{defi}

Consider the limit $t\to 0$, which implies $u\to 0$; so the formal group law becomes the multiplicative one, for $K$-theory. We obtain the following corollary of Proposition \ref{combin}.

\begin{cor}\label{schubcl} In the limit $t\to 0$, the KL-Schubert class ${\mathfrak S}_w$ becomes the Bott-Samelson class $\zeta_w$ in $K$-theory (which is known to be independent of a reduced word for $w$, and to coincide with the corresponding Schubert class, defined topologically).
\end{cor}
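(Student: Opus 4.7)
The strategy is to apply Proposition \ref{combin} directly to $w^{-1}$ and then take the limit $t\to 0$ term by term. The crucial observation is a parameter-level one: the base ring is $R=\Z[t^{\pm1},(t+t^{-1})^{-1}]$, with $\mu_1=1$ fixed and $\mu_2=-u=-(t+t^{-1})^{-2}$. As $t\to 0$, both $u$ and $\mu_2$ tend to $0$, while $\mu_1$ remains equal to $1$. The hyperbolic formal group law \eqref{defell} therefore specializes to $F_m(x,y)=x+y-xy$, which is the multiplicative formal group law of $K$-theory; correspondingly, the Borel/GKM model of $\HY_T^*(G/B)$, together with the push-pull operators $Y_i$ acting on it, specializes to the analogous data for $K_T(G/B)$.

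Now fix any reduced word $I_{w^{-1}}=(j_1,\dots,j_\ell)$ for $w^{-1}$. Applying Proposition \ref{combin} to $w^{-1}$ and using $\ell(w^{-1})=\ell(w)$, we obtain
\[(t+t^{-1})^{-\ell(w)}\,\Gamma_{w^{-1}}=Y_{I_{w^{-1}}}+\sum_{\stackrel{v<w^{-1}}{\ell(w)-\ell(v)\in 2\Z}} c_v\,u^{(\ell(w)-\ell(v))/2}\,Y_{I_v}.\]
Every correction summand carries a strictly positive power of $u$ (since $v<w^{-1}$ strictly and the length difference is a positive even integer), and the operators $Y_{I_v}$ themselves have coefficients that are rational expressions in the $y_\lambda$ built from the formal group law, all of which remain well-defined under the specialization. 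Consequently, as $t\to 0$, the right-hand side converges as an operator on the GKM model to $Y_{j_1}\cdots Y_{j_\ell}$, now interpreted inside $K_T(G/B)$.

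To conclude, apply this limit to $\zeta_\emptyset$. Setting $I_w:=(j_\ell,\dots,j_1)$, the reverse of $I_{w^{-1}}$ (which is itself a reduced word for $w$), the Bott-Samelson recursion \eqref{pushpull} gives $Y_{j_1}\cdots Y_{j_\ell}\,\zeta_\emptyset=\zeta_{I_w}$ in $K_T(G/B)$. By \cite[Thm.~3.7]{BE}, Bott-Samelson classes in $K$-theory are independent of the chosen reduced word and coincide with the structure-sheaf Schubert classes $\zeta_w$. Hence ${\mathfrak S}_w\to\zeta_w$ in the limit, as claimed. The argument presents no substantial obstacle: once Proposition \ref{combin} is granted, the only thing requiring attention is the routine check that the formal group law and all associated push-pull and GKM data specialize correctly under $t\to 0$.
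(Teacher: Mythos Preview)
Your proposal is correct and takes essentially the same approach as the paper, which treats the corollary as immediate from Proposition~\ref{combin} together with the observation that $u=(t+t^{-1})^{-2}\to 0$ as $t\to 0$ (so the correction terms vanish and the formal group law degenerates to the multiplicative one). Your write-up is simply a careful unpacking of that one-line argument, including the correct application of Proposition~\ref{combin} to $w^{-1}$ and the identification $Y_{I_{w^{-1}}}\zeta_\emptyset=\zeta_{I_w}$ via \eqref{pushpull}.
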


The following corollary provides additional motivation for the KL-Schubert classes.

\begin{cor}
The classes $\{{\mathfrak S}_w\::\:w\in W\}$ form a basis of $\HY_T^*(G/B)$.
\end{cor}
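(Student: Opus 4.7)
The plan is to deduce the corollary from Proposition~\ref{combin} by showing that each KL-Schubert class is a Bruhat-unitriangular modification of the corresponding Bott-Samelson class, so the change of basis is invertible over $S$. Fix, for each $w\in W$, a reduced word $I_w$, and let $I_{w^{-1}}$ denote its reverse (a reduced word for $w^{-1}$). By the recursion \eqref{pushpull}, the action of $Y_{I_{w^{-1}}}=Y_{j_1}\cdots Y_{j_l}$ on $\zeta_\emptyset$ reproduces $\zeta_{I_w}$, and more generally each operator $Y_{I_v}$ sends $\zeta_\emptyset$ to a Bott-Samelson class $\eta_v$ for some reduced word of $v^{-1}$. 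Substituting the expansion of Proposition~\ref{combin} and applying both sides to $\zeta_\emptyset$ would then yield
\[
\mathfrak{S}_w \;=\; \zeta_{I_w} \;+\; \sum_{\substack{v<w^{-1}\\ \ell(w)-\ell(v)\in 2\Z}} c_v\,u^{(\ell(w)-\ell(v))/2}\,\eta_v.
\]

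Next I would read off the GKM-triangularity of this expression. Since $v<w^{-1}$ is equivalent to $v^{-1}<w$, and \eqref{topclassgen} gives that $\eta_v$ has GKM-support contained in $[e,v^{-1}]$, every correction term above has support in a proper lower Bruhat ideal of $w$. Combined with \eqref{topclassgen} applied to the leading Bott-Samelson term, this yields
\[
(\mathfrak{S}_w)_u=0 \text{ for } u\not\le w, \qquad (\mathfrak{S}_w)_w=\prod_{\alpha\in\Phi^+\cap w\Phi^+} y_{-\alpha},
\]
which is precisely the leading GKM data of $\zeta_{I_w}$.

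To conclude, I would expand $\mathfrak{S}_w=\sum_u U_{u,w}\,\zeta_{I_u}$ uniquely in the fixed Bott-Samelson $S$-basis of $\HY_T^*(G/B)$; this is legitimate since $\mathfrak{S}_w$ lies in the module, being obtained from $\zeta_\emptyset$ by operators from $\mathbf{D}_F$. A standard maximality argument---choose a Bruhat-maximal $u$ with $U_{u,w}\neq 0$ and evaluate at GKM-position $u$---forces $U_{u,w}=0$ whenever $u\not\le w$, and then $U_{w,w}=1$ by matching the distinguished diagonal value $\prod_{\alpha\in\Phi^+\cap w\Phi^+}y_{-\alpha}$. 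Thus, under any linear extension of the Bruhat order, $(U_{u,w})$ is an upper-triangular matrix over $S$ with unit diagonal, hence invertible over $S$, and $\{\mathfrak{S}_w:w\in W\}$ is an $S$-basis. The only mild subtlety is that the correction classes $\eta_v$ need not agree with the chosen Bott-Samelson classes $\zeta_{I_{v^{-1}}}$, so their triangular expansion in the fixed basis must be recovered via the same maximality trick rather than read off directly from the coefficients in the displayed formula.
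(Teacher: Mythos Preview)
Your proof is correct, but it reaches the unitriangularity conclusion by a different route than the paper. The paper argues entirely at the level of operators in $\mathbf{D}_F$: having fixed reduced words $I_v$, it invokes the twisted braid relations \eqref{braid3}--\eqref{braid6} to rewrite the expansion of Proposition~\ref{combin} directly in the fixed basis $\{Y_{I_v}\}$; since each braid relation replaces one reduced-word product by another plus strictly shorter terms, the rewriting preserves the triangular shape with $1$ on the diagonal, and the triangularity of the transition matrix to $\{\zeta_{I_v}\}$ follows immediately. You instead pass to cohomology classes and use the GKM support data from \eqref{topclassgen}: the corrections $\eta_v$ are Bott-Samelson classes (for possibly non-chosen reduced words) supported strictly below $w$, which forces the same diagonal localization value as $\zeta_{I_w}$, and then a Bruhat-maximality/cancellation argument recovers the unitriangular expansion in the \emph{fixed} basis. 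Your approach avoids any explicit manipulation of braid relations and is robust to the ambiguity in the reduced words appearing in Proposition~\ref{combin}, at the cost of needing the (harmless) fact that the diagonal values $\prod_{\alpha\in\Phi^+\cap u\Phi^+}y_{-\alpha}$ are regular in $S$; the paper's approach is shorter but tacitly uses that iterated braid rewriting terminates in a triangular form.
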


\begin{proof} Fix a reduced word $I_v$ for each $v$ in $W$. By using the twisted braid relation, we can convert the expansion of $\Gamma_w$ in Proposition~\ref{combin} into one in the basis $\{Y_{I_v}\}$ of $\mathbf{D}_F$. It follows that the transition matrix from the KL-Schubert classes ${\mathfrak S}_w$ to the basis of Bott-Samelson classes $\{\zeta_{I_v}\::\:v\in W\}$ is triangular with $1$'s on the diagonal. 
\end{proof}

Let us now calculate the KL-Schubert classes in ranks 1 and 2, with the exception of type $G_2$. 

\begin{example}\label{sa1a2} For type $A_1$ it is immediate that ${\mathfrak S}_{s_1}=Y_1\,\zeta_{\emptyset}=1$. For the rank 2 cases, we use the formulas for $(t+t^{-1})^{-\ell(w)}\Gamma_w$ in Examples~\ref{a1a2} and \ref{b2g2}. In types $A_2$ and $C_2$, the elements ${\mathfrak S}_{s_i}$ and ${\mathfrak S}_{s_is_j}$ coincide with the corresponding Bott-Samelson classes, given by \eqref{calcs3}, \eqref{calcc2}, and \eqref{calcc21}. 
Furthermore, in both cases we also have ${\mathfrak S}_{w_\circ}=1$, based on \eqref{calcs3} and \eqref{calcc2}. This fact is stated for all types $A_{n-1}$ and $C_n$ in Theorem~\ref{mainthm}~(2).

Also note that, in type $C_2$, the KL-Schubert class ${\mathfrak S}_{s_0s_1s_0}$ is obtained by setting $u=0$ in the Bott-Samelson class $\zeta_{0,1,0}$, expressed in \eqref{calcc2}. The simple expressions for the above KL-Schubert classes are not surprising because in all these cases the corresponding Schubert varieties are non-singular; see Remark~\ref{nsrank2} and \cite{BL}. The only singular Schubert variety in type $C_2$ is $X(s_1s_0s_1)$, with maximal singular locus $X(s_1)$; correspondingly, we calculate based on \eqref{calcc21}:
\begin{equation}\label{exkls}
({\mathfrak S}_{s_1s_0s_1})_{\rm id}=({\mathfrak S}_{s_1s_0s_1})_{s_1}=2[\overline{1}2]-[\overline{1}2]^2+u[\overline{1}2]([\overline{1}2][\overline{1}1]+[\overline{1}2][\overline{2}2]-[\overline{1}1][\overline{2}2])\,.
\end{equation}
The other values of ${\mathfrak S}_{s_1s_0s_1}$ are the same as those of the Bott-Samelson class $\zeta_{1,0,1}$, expressed in \eqref{calcc21}. 
\end{example}

In \cite{LZ}[Conjecture 6.4] we conjectured a positivity property for the (hyperbolic) Bott-Samelson classes. Here we conjecture the same property for the KL-Schubert classes. 

\begin{conj}\label{posconj}
The evaluation $({\mathfrak S}_{v})_{w}$, for any $w\le v$, can be expressed as a (possibly infinite) sum of monomials in $y_{-\alpha}$, where $\alpha$ are positive roots, such that the coefficient of each monomial is of the form 
$$(-1)^{k-(N-\ell(v))}\, c\, u^{(m-k)/2}\,,$$
 where $c$ is a positive integer, $m$ is the degree of the monomial, $N-\ell(v)\le k\le m$, $m-k$ is even, and $N$ is the number of positive roots. 
\end{conj}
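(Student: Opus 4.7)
The plan is to argue by induction on $\ell(w)$, exploiting the iterative formula \eqref{iterkl} for the Kazhdan-Lusztig basis. Applied to $w^{-1}=s_i v^{-1}$ (so $v=ws_i<w$), substituted into the definition of ${\mathfrak S}_w$, and divided through by $(t+t^{-1})^{\ell(w)}$ using $(t+t^{-1})^{-2}=u$, the recursion becomes
\[
{\mathfrak S}_w \;=\; Y_i({\mathfrak S}_v) \;-\; \sum_{\substack{z\prec v \\ zs_i<z}} \mu(z,v)\,u^{(\ell(w)-\ell(z))/2}\,{\mathfrak S}_z\,,
\]
where each $\mu(z,v)$ is a non-negative integer and $\ell(w)-\ell(z)$ is even for every $z$ in the sum. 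The base case $w={\rm id}$ is immediate from \eqref{topclass}: the unique nonzero value $({\mathfrak S}_{\rm id})_{\rm id}=\prod_{\alpha\in\Phi^+}y_{-\alpha}$ is a single monomial of degree $m=N=k$ with $c=1$ and sign exponent $(-1)^{0}=1$, in agreement with the conjecture.

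\textbf{Inductive step.} Using the explicit GKM formula $(Y_i h)_w=h_w/y_{-w\alpha_i}+h_{ws_i}/y_{w\alpha_i}$ together with the formal inverse $y_{w\alpha_i}=\iota(y_{-w\alpha_i})=-y_{-w\alpha_i}(1+y_{-w\alpha_i}+y_{-w\alpha_i}^2+\cdots)$ for the hyperbolic formal group law with $\mu_1=1$, one expands $Y_i({\mathfrak S}_v)$ as a formal sum of monomials in the $y_{-\alpha}$'s. A direct bookkeeping shows that the factor $u^{(\ell(w)-\ell(z))/2}$ multiplying ${\mathfrak S}_z$ in the recursion exactly implements the shift $k\mapsto k'=k-(\ell(w)-\ell(z))$: the exponents $k-(N-\ell(z))$ and $k'-(N-\ell(w))$ coincide, and the admissible range $N-\ell(z)\le k\le m$ transports to $N-\ell(w)\le k'\le m$. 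So each term $u^{(\ell(w)-\ell(z))/2}{\mathfrak S}_z$ respects the sign pattern for ${\mathfrak S}_w$; since it is subtracted, the subtracted contribution has the \emph{opposite} sign, and one must therefore understand how the subtraction interacts with $Y_i({\mathfrak S}_v)$.

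\textbf{Cancellation.} The heart of the proof is to show that the ``offending'' monomials in $Y_i({\mathfrak S}_v)$---those violating the sign pattern for ${\mathfrak S}_w$---are cancelled precisely by the subtracted sum, and that the surviving monomials, possibly after rewriting via formal-group-law relations among the $y_{-\alpha}$'s, respect the pattern. The phenomenon is visible in the rank-two computations of Example~\ref{sa1a2}: in type $A_2$ one has ${\mathfrak S}_{w_\circ}=\zeta_{1,2,1}-u\,\zeta_1$, and the summand $u[13][23]$ in $(\zeta_{1,2,1})_{\rm id}$ is cancelled by $-u\,(\zeta_1)_{\rm id}=-u[13][23]$, leaving the positive value $1$; analogous cancellations in type $C_2$ produce the expression \eqref{exkls}. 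A systematic proof is expected to require either a closed-form formula for $({\mathfrak S}_w)_v$ organized by reduced subwords of a fixed reduced word for $w$, with weights built from the Kazhdan-Lusztig $\mu$-data---a hyperbolic analogue of the Billey-Graham-Willems formula---or a geometric interpretation of ${\mathfrak S}_w$ as an elliptic avatar of the intersection cohomology class of the Schubert variety $X(w)$, from which positivity would follow from the Elias-Williamson theorem on Kazhdan-Lusztig positivity.

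\textbf{Main obstacle.} The hardest step is establishing the cancellation (and, where needed, the rewriting via formal-group-law relations) in full generality. A natural intermediate target is first to prove the Bott-Samelson positivity \cite[Conjecture~6.4]{LZ}---itself open beyond small ranks---and then to propagate it through the signed combination of Bott-Samelson classes furnished by Proposition~\ref{combin}; but the alternating signs in that combination make the deduction highly non-trivial even granting Bott-Samelson positivity. Ultimately, the needed cancellations encode delicate KL-positivity phenomena, and a proof in full generality probably awaits either an explicit combinatorial formula with manifest sign control, or a geometric model for $\HY_T^*(G/B)$ in which the KL-Schubert classes arise from intersection-cohomology-type objects.
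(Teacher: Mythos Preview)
The statement you are attempting to prove is labeled a \emph{Conjecture} in the paper, and the paper offers no proof: it only records the statement, notes in Remark~\ref{easypos} that it generalizes Graham's $K$-theoretic positivity, and verifies a single instance in Example~\ref{posex}. There is therefore no argument in the paper to compare your proposal against.

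Your proposal is not a proof either, and you say so yourself. The inductive recursion you write down is correct, and the bookkeeping for the subtracted terms $u^{(\ell(w)-\ell(z))/2}\,{\mathfrak S}_z$ is fine, but the decisive step---showing that the monomials in $Y_i({\mathfrak S}_v)$ that violate the sign pattern are exactly cancelled by the subtracted sum---is asserted rather than established. Your own ``Main obstacle'' paragraph makes this explicit: you note that even granting the Bott--Samelson positivity conjecture of \cite{LZ}, the alternating signs in the KL expansion of Proposition~\ref{combin} prevent a direct deduction, and you conclude that a general argument ``probably awaits'' either a hyperbolic Billey--Graham--Willems formula or a geometric model. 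That is an honest assessment of the state of affairs, but it means what you have written is a research outline, not a proof. One further technical point: in your expansion of $(Y_i h)_w$ you divide $h_w$ and $h_{ws_i}$ separately by $y_{\pm w\alpha_i}$; individually these need not be polynomials in the $y_{-\alpha}$'s (only their sum is, by the GKM condition), so the monomial-by-monomial sign tracking you describe is not well-posed without first combining the two fractions.
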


\begin{rem}\label{easypos} The above positivity property is a generalization of the one in $K$-theory which is made explicit in Graham's formula \cite{graekt} for the localization of Schubert classes at torus fixed points, cf. also~\cite{LZ}.
\end{rem}
 
\begin{example}\label{posex}
Let us check the positivity property for $({\mathfrak S}_{s_1s_0s_1})_{\rm id}=({\mathfrak S}_{s_1s_0s_1})_{s_1}$ calculated in \eqref{exkls}. Based on the formula for $y_{\alpha+\beta}$ in \eqref{fglcalc} with $\alpha=[12]$ and $\beta=[\overline{1}1]$, so that $\alpha+\beta=[\overline{1}2]$, we re-express the mentioned evaluation in a ``positive'' form. In fact, it suffices to focus on the following subexpression:
\begin{align*}&u[\overline{1}2]([\overline{1}2][\overline{1}1]+[\overline{1}2][\overline{2}2]-[\overline{1}1][\overline{2}2])=\\
=&u[\overline{1}2]([\overline{1}2][\overline{1}1]+([12]+[\overline{1}1]-[12][\overline{1}1]+u[12][\overline{1}1][\overline{1}2])[\overline{2}2]-[\overline{1}1][\overline{2}2])\\
=&u[\overline{1}2]^2[\overline{1}1]+u[12][\overline{1}2][\overline{2}2]-u[12][\overline{1}2][\overline{1}1][\overline{2}2]+u^2[12][\overline{1}2]^2[\overline{1}1][\overline{2}2]\,.
\end{align*}
\end{example}

Furthermore, a natural property that the Schubert classes should have, which we conjecture for our classes ${\mathfrak S}_w$, is the following. 

\begin{conj}\label{mainconj}
If the Schubert variety $X(w)$ is smooth, then the class $[X(w)]$ given by {\rm \eqref{smooth}} coincides with ${\mathfrak S}_w$. 
\end{conj}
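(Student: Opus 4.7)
The plan is to combine the Kazhdan--Lusztig smoothness criterion with a pointwise verification in the GKM model. Smoothness of $X(w)$ forces $P_{v,w}(t) = 1$ for all $v \le w$, so \eqref{klb} collapses to
\[\gamma_w = \sum_{v \le w} t^{\ell(w)-\ell(v)}\,\tau_v.\]
Substituting $\tau_i = (t+t^{-1})Y_i - t$ along a reduced word for each $v$ yields an explicit $\mathbb{Z}[u]$-polynomial expression for $(t+t^{-1})^{-\ell(w)}\,\Gamma_w$ in terms of the push-pull operators $Y_i$, and the analogous expression for $\Gamma_{w^{-1}}$ follows since $w\mapsto w^{-1}$ preserves length and Bruhat order. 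Applied to $\zeta_\emptyset$ via the GKM formula $(Y_i h)_u = h_u/y_{-u\alpha_i} + h_{us_i}/y_{u\alpha_i}$, this operator produces a formula for the localizations $({\mathfrak S}_w)_u$, and the conjecture becomes the matching of these values with \eqref{smooth}.

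The verification then splits into two parts. For the support condition $({\mathfrak S}_w)_u = 0$ when $u \not\le w$, one shows that each summand $\tau_v\,\zeta_\emptyset$ arising from the expansion is supported on the Bruhat interval $[e,v] \subseteq [e,w]$, by induction on $\ell(v)$ and the form of $(Y_ih)_u$. For the nontrivial values at $u \le w$, one wants
\[({\mathfrak S}_w)_u = \prod_{\beta \in \Phi^+,\; s_\beta u \not\le w} y_{-\beta}.\]
The natural route is induction on $\ell(w)$ via the recursive formula \eqref{iterkl}: picking $s_i$ with $s_iw < w$, write
\[\gamma_w = \gamma_{s_i}\gamma_{s_iw} - \sum_{\stackrel{z \prec s_iw}{s_iz<z}}\mu(z, s_iw)\,\gamma_z,\]
and use smoothness to control the $\mu$-coefficients. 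The action of $Y_i$ on the inductively known value of $({\mathfrak S}_{s_iw})_u$ should then reduce the claim to a combinatorial identity on the tangent set $\{\beta \in \Phi^+ : s_\beta u \le w\}$.

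The main obstacle is that smoothness of $X(w)$ does not propagate to $X(s_iw)$ or to the other $X(z)$ entering the recursion, so the inductive hypothesis need not be available for the subordinate terms. A possible way around this is to evaluate the full sum $\sum_{v \le w} t^{\ell(w)-\ell(v)}\,\tau_v\,\zeta_\emptyset$ at $u$ directly, using a Billey--Graham-type combinatorial formula for Bott--Samelson localizations along the lines of \cite{LZ}, and then extract \eqref{smooth} via a cancellation identity on $[e,w]$. Alternatively, one could deform in $u$: at $u = 0$ Corollary~\ref{schubcl} gives the identity in $K$-theory, so it would suffice to show that both ${\mathfrak S}_w$ and $[X(w)]$ are polynomial in $u$ and satisfy the same higher-order relations coming from the twisted braid relations, with Carrell--Peterson's factorization of the Poincar\'e polynomial of $[e,w]$ supplying the combinatorial input in the smooth case.
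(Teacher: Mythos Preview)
The statement you are attempting to prove is stated in the paper as a \emph{conjecture}, not a theorem; the paper gives no proof of it. What the paper does establish are the special cases recorded in Theorem~\ref{mainthm}: (1) $w$ a product of distinct simple reflections (in all types), and (2) $w^{-1}$ a highest coset representative in $W_n/W_{n-1}$ in types $A_{n-1}$ and $C_n$. Those proofs proceed by very concrete, type-specific computations: case~(1) is dispatched via Example~\ref{norep} and Lemma~\ref{prodcl}, while case~(2) uses induction on $n$ together with explicit auxiliary functions $\rho_k^n$ and the algebraic identity of Lemma~\ref{lem0}. There is no general argument in the paper to compare your proposal against.

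Your proposal is not a proof but a sketch with acknowledged gaps, and you have correctly identified the central obstruction: smoothness of $X(w)$ does not descend to the $X(s_iw)$ and $X(z)$ appearing in the recursion~\eqref{iterkl}, so the induction does not close. Neither of your suggested workarounds is developed enough to overcome this. The first (a direct Billey--Graham-type evaluation of $\sum_{v\le w} t^{\ell(w)-\ell(v)}\tau_v\,\zeta_\emptyset$ followed by a cancellation identity) would require a new combinatorial identity for the hyperbolic formal group law over an arbitrary smooth Bruhat interval, and no such identity is supplied. The second (deforming in $u$ from the known $K$-theory case) is too weak as stated: agreement at $u=0$ cannot by itself force agreement for all $u$, and the phrase ``satisfy the same higher-order relations coming from the twisted braid relations'' is not made precise enough to yield a rigidity statement. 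In short, your outline recapitulates why the conjecture is plausible and where the difficulty lies, but does not resolve it; this is consistent with the paper's own position that the general statement remains open.
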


\begin{rem}\label{nsrank2} Examples~\ref{posex} and \ref{sa1a2} show that Conjectures~\ref{posconj} and \ref{mainconj} are true in ranks 1 and 2, with the exception of type $G_2$.
\end{rem}

We tested Conjecture~\ref{mainconj} on the computer. Moreover, we proved the special cases below; here $W_n$ denotes the Weyl group of a root system of type $A_{n-1}$ or $C_n$, and we use the standard embedding of $W_{n-1}$ into $W_n$. 

\begin{theo}\label{mainthm}
Conjecture {\rm \ref{mainconj}} is true in the following cases (which all correspond to non-singular Schubert varieties): 
\begin{enumerate}
\item[{\rm (1)}] in all types for $w$ which is a product of distinct simple reflections;
\item[{\rm (2)}] in the classical types $A_{n-1}$ and $C_n$, for $w^{-1}$, where $w$ is a highest coset representative for $W_n/W_{n-1}$ (in particular, for $w=w_\circ$, the longest element, we have ${\mathfrak S}_{w_\circ}=1$). 
\end{enumerate}
\end{theo}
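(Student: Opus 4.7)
\emph{Part (1).} My plan is to treat parts (1) and (2) separately, each by an induction. For part (1), Example~\ref{norep} gives $(t+t^{-1})^{-\ell(w)}\Gamma_w = Y_w$, with $Y_w$ independent of the reduced word; applying this to $w^{-1}$ (also a product of distinct simple reflections) and using \eqref{pushpull}, I identify
\[
{\mathfrak S}_w = (t+t^{-1})^{-\ell(w)}\Gamma_{w^{-1}}(\zeta_\emptyset) = Y_{w^{-1}}\zeta_\emptyset = \zeta_{I_w}
\]
for any reduced word $I_w$ of $w$. Hence part (1) reduces to the GKM identity $\zeta_{I_w}=[X(w)]$, which I would prove by induction on $\ell(w)$. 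The base case $w=e$ is immediate. For the step, factor $w=s_iw'$ with $s_i$ absent from a reduced word for $w'$ (possible because $w$ is a product of distinct simple reflections), so $\zeta_{I_w}=Y_i\zeta_{I_{w'}}=Y_i[X(w')]$ by induction. It then suffices to verify the pointwise identity $(Y_i[X(w')])_v=[X(s_iw')]_v$ for each $v\le s_iw'$. Using the GKM action $(Y_if)_v=y_{-v\alpha_i}^{-1}f_v+y_{v\alpha_i}^{-1}f_{vs_i}$ together with the closed form \eqref{smooth}, this becomes an identity about the root-product sets $\{\beta\in\Phi^+ : s_\beta v\le w'\}$, $\{\beta\in\Phi^+ : s_\beta vs_i\le w'\}$, and $\{\beta\in\Phi^+ : s_\beta v\le s_iw'\}$, which is tractable because the Bruhat interval $[e,s_iw']$ is Boolean by the subword property.

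\emph{Part (2).} The central case is $w=w_\circ$, where ${\mathfrak S}_{w_\circ}=1$ must match $[X(w_\circ)]=[G/B]=1$. Since $G/B$ is smooth, all KL polynomials $P_{v,w_\circ}$ equal $1$, giving the closed form
\[
\gamma_{w_\circ}=\sum_{v\in W_n} t^{\ell(w_\circ)-\ell(v)}\,\tau_v.
\]
Transferring this to $\Gamma_{w_\circ}$ via \eqref{taui}, applying it to $\zeta_\emptyset$ in GKM, and simplifying using \eqref{topclassgen} should yield the constant function $1$. For a general highest coset representative $w$, I would use the length-additive factorization $w=u\cdot w_\circ^{W_{n-1}}$, where $u$ is the longest element of the minimal coset representatives of $W_n/W_{n-1}$: in type $A_{n-1}$, $u$ is a product of distinct simple reflections $s_{n-1}s_{n-2}\cdots s_k$, and in type $C_n$ it has an analogous explicit form. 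I would then induct on the rank $n$, using the inductive hypothesis on the $w_\circ^{W_{n-1}}$ factor (viewed inside $W_{n-1}$) and part~(1) on the $u$ factor, combining them via the multiplicativity of the KL basis on length-additive parabolic factorizations.

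\emph{Main obstacle.} I expect the hardest step to be in part (2), specifically showing that $\Gamma_{w_\circ}\,\zeta_\emptyset$ collapses to $1$ in GKM. After expanding $\Gamma_{w_\circ}$ in a $Y$-basis via the twisted braid relations \eqref{braid3}, \eqref{braid4}, \eqref{braid6}, the resulting sum has many terms carrying nontrivial powers of $u$; controlling the global cancellation in GKM requires exploiting the full KL-basis closed form together with the Boolean combinatorics of the smooth interval $[e,w_\circ]$. Type $C_n$ is the harder of the two classical cases because the relevant reduced words interleave the two kinds of simple reflections, so the multiplicativity of the KL basis under the parabolic factorization is less immediate than in type $A$.
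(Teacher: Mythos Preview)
Your reduction to $\zeta_{I_w}=[X(w)]$ via Example~\ref{norep} is correct, modulo an indexing slip: from \eqref{pushpull} you get $\zeta_{I_w}=Y_i\zeta_{I_{w'}}$ when $w=w's_i$ (right factor), not $w=s_iw'$. More importantly, the paper bypasses your pointwise verification. It observes that since the Bruhat interval below a product of distinct simple reflections is Boolean, at each step of the recursion exactly one of $v,vs_i$ lies in the previous support, so $Y_i$ acts by pure division and the formal group law is never invoked. Thus every localization $(\zeta_{I_w})_v$ is literally a product $\prod y_{-\alpha}$, and Lemma~\ref{prodcl} (comparison with the $K$-theory limit via Corollary~\ref{schubcl}, plus unique factorization) finishes immediately. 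Your direct verification of $(Y_i[X(w')])_v=[X(w)]_v$ would work too, but you would have to match the root sets in \eqref{smooth} explicitly; the paper's route is cleaner.

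\textbf{Part (2).} Here there are genuine gaps. First, the ``multiplicativity of the KL basis on length-additive parabolic factorizations'' is false: already in type~$A_2$, with $w_\circ=(s_1s_2)\cdot s_1$, equation~\eqref{a2xx} gives $\gamma_{s_1s_2}\gamma_{s_1}=\gamma_{w_\circ}+\gamma_{s_1}\neq\gamma_{w_\circ}$. So you cannot combine part~(1) on the coset factor with an inductive hypothesis on the parabolic longest element. (In type~$C_n$ this would fail for a second reason: the minimal coset representatives $s_{k}\cdots s_1 s_0 s_1\cdots s_{n-1}$ have repeated simple reflections, so part~(1) does not apply to them.) Second, the interval $[e,w_\circ]$ is all of $W$ and is not Boolean, so that heuristic is misplaced. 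Third, ``transferring to $\Gamma_{w_\circ}$, applying to $\zeta_\emptyset$, and simplifying should yield $1$'' is precisely the hard part, and nothing in your outline indicates how it goes.

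The paper's mechanism is different. It does start from $\gamma_{w_\circ^n}=\sum_{v}t^{N-\ell(v)}\tau_v$ and the coset decomposition, but as a factorization of the $\tau$-sum, not of the KL basis:
\[
\gamma_{w_\circ^n}=\Bigl(\sum_{u\in W^{n-1}}t^{\,n-1-\ell(u)}\tau_u\Bigr)\,\gamma_{w_\circ^{n-1}}\,.
\]
By induction on $n$ one shows $(t+t^{-1})^{-(N-n+1)}\Gamma_{w_\circ^{n-1}}(\zeta_\emptyset^n)=\rho_n^n$, where $\rho_k^n(w):=[i_1n]\cdots[i_{k-1}n]$ if $w^{-1}(n)\ge k$ and $0$ otherwise. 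The core is then a telescoping identity, proved from the hyperbolic relation $y_{\alpha+\beta}/y_\alpha+y_\beta/y_{-\alpha}=1+u\,y_\beta y_{\alpha+\beta}$ (Lemma~\ref{lem0}):
\[
\tau_k\cdots\tau_{n-1}\,\rho_n^n=(t+t^{-1})^{n-k}\rho_k^n-t(t+t^{-1})^{n-k-1}\rho_{k+1}^n\,,
\]
so that summing over $k$ collapses to $\rho_1^n\equiv 1$. The general highest-coset case $w_m$ simply truncates this sum at $k=m$ and again invokes Lemma~\ref{prodcl}. Type~$C_n$ runs the same $\rho_k$ machinery with a signed-permutation indexing. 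These $\rho_k$ functions and the telescoping lemma are the missing ingredients in your plan.
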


\begin{rems} (1) Cases (1), which are concerned with $w$ of ``small'' length, do not involve the formal group law in the recursive calculation of the related Bott-Samelson classes $\zeta_{I_w}$ (see the proof below); this fails as soon as the reduced words for $w$ have repeated simple reflections. 

(2) Cases (2), which are concerned with $w$ of ``large'' length, are highly non-trivial because, for instance, the various Bott-Samelson classes $\zeta_{I_{w}}$ have more and more involved expressions as $\ell(w)$ increases, see Example~\ref{bsa3}. However, the Kazhdan-Lusztig operator $\Gamma_{w}$ combines several classes $\zeta_{I_{v}}$ for $v\le w$ such that the final result is simple, cf. Proposition~\ref{combin}.
\end{rems}

\section{Proofs}

We now prove Theorem \ref{mainthm} in several steps. The following lemma will be useful.

\begin{lem}\label{prodcl} {\rm (1)} If $\prod_\alpha y_{-\alpha}= \prod_\beta y_{-\beta}$ in the cohomology or $K$-theory algebra $S$, for two subsets of the positive roots, then the two subsets are the same. 

{\rm (2)} Assume that the Schubert variety $X(w)$ is non-singular. If the evaluation of ${\mathfrak S}_w$ at every element of $W$ is a product $\prod_\alpha y_{-\alpha}$ over some subset of the positive roots, then ${\mathfrak S}_w=[X(w)]$.
\end{lem}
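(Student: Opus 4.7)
\emph{Part (1).} The plan is to reduce to unique factorization in a polynomial ring by passing to the associated graded. Filter $S$ by powers of the augmentation ideal $\mathfrak{m}=\ker(S\to R,\,y_\lambda\mapsto 0)$; in both the cohomology and $K$-theory cases one has $y_{-\alpha}\equiv -\alpha\pmod{\mathfrak{m}^2}$, and $\mathrm{gr}\,S\cong\mathrm{Sym}_R(\Lambda)$ is a polynomial ring. An equality $\prod_{\alpha\in A}y_{-\alpha}=\prod_{\beta\in B}y_{-\beta}$ forces the lowest-degree terms in $\mathrm{gr}\,S$ to agree, giving $\prod_{\alpha\in A}(-\alpha)=\prod_{\beta\in B}(-\beta)$. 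Matching degrees yields $|A|=|B|$, and unique factorization then yields $A=B$ as multisets of primes; here one uses that distinct positive roots are pairwise non-proportional linear forms on $\Lambda\otimes\mathbb{Q}$.

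\emph{Part (2).} The GKM embedding \eqref{gkmemb} reduces the identity $\mathfrak{S}_w=[X(w)]$ in $\HY_T^*(G/B)$ to checking it at every fixed point $v\in W$. I would split into two cases.

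First, for $v\not\le w$ both sides vanish: $[X(w)]_v=0$ by \eqref{smooth} (extended), and $\mathfrak{S}_w|_v=0$ because, by Proposition~\ref{combin}, $\mathfrak{S}_w$ is a $u$-weighted sum of Bott-Samelson classes $\zeta_{I_{v'}}$ with $v'\le w$, each of which vanishes at $v\not\le v'$ by \eqref{topclassgen}. Next, for $v\le w$ the hypothesis gives $\mathfrak{S}_w|_v=\prod_{\alpha\in A(v)}y_{-\alpha}$ for some subset $A(v)\subseteq\Phi^+$, while \eqref{smooth} gives $[X(w)]_v=\prod_{\beta\in B(v)}y_{-\beta}$ with $B(v)=\{\beta\in\Phi^+:s_\beta v\not\le w\}$. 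To identify $A(v)$ with $B(v)$, specialize $t\to 0$, thereby passing to equivariant $K$-theory. On one hand, Corollary~\ref{schubcl} says $\mathfrak{S}_w$ specializes to the Bott-Samelson class $\zeta_w$ in $K$-theory, which equals the structure sheaf class $[\mathcal{O}_{X(w)}]$ by Kempf--Ramanathan vanishing; on the other hand, \eqref{smooth} is a universal GKM formula and specializes to the same $K$-theoretic class. Consequently $\prod_{\alpha\in A(v)}y_{-\alpha}=\prod_{\beta\in B(v)}y_{-\beta}$ in the $K$-theoretic formal group algebra, and part~(1) forces $A(v)=B(v)$. Substituting back into the hyperbolic formal group algebra yields $\mathfrak{S}_w|_v=[X(w)]_v$ for every $v\le w$, completing the proof.

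\emph{Main obstacle.} The step requiring most care is controlling the $t\to 0$ specialization on both sides: for $\mathfrak{S}_w$ one uses the explicit structure of Proposition~\ref{combin}, the key point being that every ``correction'' term carries a positive power of $u\to 0$; for $[X(w)]$ one must verify that the smoothness formula \eqref{smooth} is truly insensitive to the FGL, depending on it only through the reinterpretation of the variables $y_{-\beta}$. Both points are essentially formal but are the places where one must be explicit.
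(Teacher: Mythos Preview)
Your proposal is correct and follows essentially the same route as the paper. For part~(1) the paper identifies the cohomology $S$ with (the completion of) $\mathrm{Sym}_{\mathbb Z}(\Lambda)$ and uses unique factorization together with non-proportionality of positive roots, then reduces the $K$-theory case to cohomology via the associated graded---exactly your argument, only you treat both cases simultaneously through $\mathrm{gr}\,S$. For part~(2) the paper is terse (``follows immediately from Corollary~\ref{schubcl} and the first part''), and what you wrote is precisely the unpacking of that sentence: specialize $t\to 0$, identify $\mathfrak S_w$ with the $K$-theoretic Schubert class via Corollary~\ref{schubcl}, match it against the specialization of \eqref{smooth}, and invoke part~(1) pointwise.
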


\begin{proof} It is well-known that the cohomology algebra $S$ can be identified with the completion of the symmetric algebra
$Sym_\Z(\Lambda)$ via $y_{-\lambda}=\lambda$; in fact, $Sym_\qq(\Lambda)\simeq\qq[x_1,\ldots,x_n]$, where $x_i=\alpha_i$. The claim now follows from the
fact that the latter is a unique factorization domain and no positive multiple of a root is another root (except for the trivial case). In the $K$-theory case, we reduce to the cohomology case by using the fact that the cohomology algebra $S$ is the associated graded algebra to the $K$-theory one. The second part follows immediately from Corollary~\ref{schubcl} and the first part.
\end{proof}

\begin{proof}[Proof of Theorem {\rm \ref{mainthm} (1)}] The corresponding Schubert varieties $X(w)$ are non-singular by a criterion of Fan \cite{fan}. By Example~\ref{norep}, a class ${\mathfrak S}_w$ is computed in this case by $Y_w$, so it coincides with $\zeta_w$. It is easy to see that this recursive computation only involves the division of a product of elements $y_{-\alpha}$ by one of the factors (in particular, the formal group law is not involved in the computation, see the application of the first two operators $Y_i$ in \eqref{calcs3}). Thus, all values $(\zeta_w)_v$ are products of elements $y_{-\alpha}$. The proof is concluded by applying Lemma~\ref{prodcl}.
\end{proof}

We now turn to Theorem \ref{mainthm} (2) for $w=w_\circ$ in type $A_{n-1}$. In other words, we show that
\begin{equation}\label{casew0a} (t+t^{-1})^{-N}\,\Gamma_{w_\circ}(\zeta_{\emptyset})=1\,,\end{equation}
where $N=\ell(w_\circ)=|\Phi^+|$, that is, the Schubert class ${\mathfrak S}_{w_\circ}$ is the fundamental class of the flag variety.
The proof is based on several lemmas.

We use freely the notation introduced in the previous sections, as well as the related background, in particular the GKM model and the formulas \eqref{actiongkm} for the action of the dual Demazure algebra in this model. Consider the root system of type $A_{n-1}$, with roots $\alpha_{ij}:=\varepsilon_i-\varepsilon_j$, and the symmetric group on $\{1,\ldots,n\}$ as Weyl group, denoted $W_n$. We use the one-line notation for permutations, i.e., $w=i_1\ldots i_n$ means that $w(k)=i_k$. Let $[ij]:=y_{-\alpha_{ij}}$ in the formal group algebra $S=S_n$. For any positive integer $k$, define the functions $\rho_k^n\,:\,W_n\rightarrow S_n$:
\begin{equation}\label{defrho}
\rho_k^n(w):=\casetwoct{[i_1n]\ldots[i_{k-1}n]}{w^{-1}(n)\ge k}{0}
\end{equation}
where $w=i_1\ldots i_{k-1}\ldots$. Clearly $\rho_k^n$ is identically $0$ if $k>n$, and \eqref{defrho} is understood to define $\rho_1^n$ as identically $1$. For simplicity, we write $\rho_k$ instead of $\rho_k^n$ for any $k$ in Lemmas \ref{lem2} and \ref{lem1}, as $n$ is fixed.

\begin{lem}\label{lem0} In the formal group algebra $S$, we have
\begin{align}\frac{y_{\alpha+\beta}}{y_\alpha}+\frac{y_\beta}{y_{-\alpha}}&=1+uy_\beta y_{\alpha+\beta} \,,\label{eq1lem0}\\
\frac{y_{2\alpha+\beta}}{y_\alpha}+\frac{y_\beta}{y_{-\alpha}}&=2-y_{\alpha+\beta}+uy_{\alpha+\beta} (y_{\beta}+y_{2\alpha+\beta})\,.
\label{eq2lem0}\end{align}
\end{lem}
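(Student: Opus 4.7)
The plan is to prove both identities by direct algebraic manipulation in the localization of the formal group algebra $S$, reducing each one to the defining relation of the hyperbolic formal group law. I would begin by setting $a:=y_\alpha$, $b:=y_\beta$, $c:=y_{\alpha+\beta}$, and $d:=y_{2\alpha+\beta}$. Since we are in the setup $\mu_1=1$, $\mu_2=-u$, the FGL formula \eqref{defell} gives the two relations
\[
c(1-uab)=a+b-ab,\qquad d(1-uac)=a+c-ac,
\]
and the formal inverse equation $F_h(y_\alpha,y_{-\alpha})=0$ becomes $a+y_{-\alpha}-a\,y_{-\alpha}=0$, yielding the closed form $1/y_{-\alpha}=(a-1)/a$.

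For \eqref{eq1lem0}, I would clear denominators by multiplying both sides by $a$ and substituting $1/y_{-\alpha}=(a-1)/a$. After a single rearrangement, the resulting polynomial identity becomes precisely $c(1-uab)=a+b-ab$, i.e., the FGL relation for $c$. So \eqref{eq1lem0} is immediate.

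For \eqref{eq2lem0}, I would apply the same maneuver: multiply through by $a$ and substitute $1/y_{-\alpha}=(a-1)/a$. The resulting identity is a relation among $d$, $c$, $a$, $b$, $u$; using the second FGL relation $d(1-uac)=a+c-ac$ to eliminate the $d$-terms, the identity collapses to $c=a+b-ab+uabc$, equivalently $c(1-uab)=a+b-ab$, which is again the FGL relation for $c$. Hence \eqref{eq2lem0} also reduces to the same single fact.

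The main ``obstacle'' is essentially just bookkeeping: the second identity looks mildly asymmetric since it involves $2\alpha+\beta$, but after clearing denominators via the formal inverse and using the FGL relation once to eliminate $d$, one is returned to the single underlying identity $c=F_h(a,b)$. No deeper difficulty arises; both identities are reformulations of the hyperbolic FGL itself, rewritten after dividing by $y_\alpha$ and $y_{-\alpha}$.
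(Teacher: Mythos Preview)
Your proposal is correct and follows essentially the same approach as the paper: both arguments use only the formal inverse formula $y_{-\alpha}=y_\alpha/(y_\alpha-1)$ and the hyperbolic FGL relation $y_{\alpha+\beta}=y_\alpha+y_\beta-y_\alpha y_\beta+u\,y_\alpha y_\beta y_{\alpha+\beta}$ (applied twice for the second identity), differing only in that the paper combines the two fractions over the common denominator $y_\alpha$ and simplifies directly, while you clear denominators first and then reduce.
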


\begin{proof} From the definition \eqref{defell} of the hyperbolic formal group law, it follows that
\begin{equation}\label{fglcalc}y_{\alpha+\beta}=y_\alpha+y_\beta-y_\alpha y_\beta+uy_\alpha y_\beta y_{\alpha+\beta}\,,\;\;\;\;\;y_{-\alpha}=\iota(y_\alpha)=\frac{y_\alpha}{y_\alpha-1}\,.\end{equation}
By using these facts, we easily derive
\begin{align*}
\frac{y_{\alpha+\beta}}{y_\alpha}+\frac{y_\beta}{y_{-\alpha}}&=\frac{y_{\alpha+\beta}+(y_\alpha y_\beta-y_\beta)}{y_\alpha}\\
&=\frac{y_\alpha+uy_\alpha y_\beta y_{\alpha+\beta}}{y_\alpha}\\
&=1+uy_\beta y_{\alpha+\beta}\,.
\end{align*}
Similarly, by using the first relation in \eqref{fglcalc} twice, we calculate
\begin{align*}
\frac{y_{2\alpha+\beta}}{y_\alpha}+\frac{y_\beta}{y_{-\alpha}}&=\frac{y_{2\alpha+\beta}+(y_\alpha y_\beta-y_\beta)}{y_\alpha}\\
&=\frac{(y_\alpha+y_{\alpha+\beta}-y_\alpha y_{\alpha+\beta}+uy_\alpha y_{\alpha+\beta}  y_{2\alpha+\beta})+(y_\alpha y_\beta-y_\beta)}{y_\alpha}\\
&=\frac{2y_\alpha+uy_\alpha y_\beta y_{\alpha+\beta}-y_\alpha y_{\alpha+\beta}+uy_\alpha y_{\alpha+\beta} y_{2\alpha+\beta}}{y_\alpha}\\
&=2-y_{\alpha+\beta}+uy_{\alpha+\beta} (y_{\beta}+y_{2\alpha+\beta})\,.
\end{align*}
\end{proof}

\begin{lem}\label{lem2} We have
\begin{equation}\label{lem2f}Y_k\,\rho_{k+1}-u\rho_{k+2}=\rho_k\,,\;\;\;\;\;Y_k\,\rho_{k+2}=\rho_{k+2}\,.\end{equation}
\end{lem}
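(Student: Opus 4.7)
My plan is to work entirely in the GKM model, where \eqref{defxy} together with the action \eqref{actiongkm} translates $Y_k$ into the pointwise formula
\begin{equation*}
(Y_k f)_w \;=\; \frac{1}{y_{-w\alpha_k}}\,f_w \;+\; \frac{1}{y_{w\alpha_k}}\,f_{ws_k},
\end{equation*}
and to verify both identities of the lemma at each $w = i_1 \ldots i_n \in W_n$ by case analysis on the position $w^{-1}(n)$ of $n$ in the one-line notation. The key observation is that $ws_k$ is obtained from $w$ by swapping the entries in positions $k$ and $k+1$, so the values of $\rho_m$ at $w$ and at $ws_k$ change only through whether $n$ occupies one of those two slots.

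For the second identity $Y_k \rho_{k+2} = \rho_{k+2}$, I would first check that $\rho_{k+2}$ is $s_k$-invariant, i.e., $\rho_{k+2}(w) = \rho_{k+2}(ws_k)$ for every $w$. When $w^{-1}(n) \in \{k, k+1\}$, swapping positions $k$ and $k+1$ moves $n$ between these two positions, so both evaluations vanish; when $w^{-1}(n) \le k-1$ they vanish as well; and when $w^{-1}(n) \ge k+2$, they coincide after reordering the two factors $[i_k n]$ and $[i_{k+1} n]$. Remark~\ref{symmi}, combined with $\kappa_k = \mu_1 = 1$ in our normalization of the hyperbolic FGL, then immediately yields $Y_k \rho_{k+2} = \rho_{k+2}$.

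For the first identity, I would split into four sub-cases according to whether $w^{-1}(n)$ equals $\le k-1$, $k$, $k+1$, or $\ge k+2$. The first three sub-cases are routine: in the first, all three terms vanish; in the second and third, exactly one of $\rho_{k+1}(w)$ and $\rho_{k+1}(ws_k)$ is nonzero, a single cancellation with $y_{\pm w\alpha_k}$ produces $\rho_k(w) = [i_1 n]\cdots[i_{k-1} n]$, and $\rho_{k+2}(w)$ vanishes automatically.

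The main obstacle---and the only place the hyperbolic formal group law really enters---is the sub-case $w^{-1}(n) \ge k+2$, where $i_k, i_{k+1} \ne n$ and neither $\rho_{k+1}(w) = [i_1 n]\cdots[i_k n]$ nor $\rho_{k+1}(ws_k) = [i_1 n]\cdots[i_{k-1} n][i_{k+1} n]$ vanishes. Using $w\alpha_k = \varepsilon_{i_k} - \varepsilon_{i_{k+1}}$, the identity reduces, after dividing by the common factor $[i_1 n]\cdots[i_{k-1} n]$, to
\begin{equation*}
\frac{[i_k n]}{[i_k i_{k+1}]} \;+\; \frac{[i_{k+1} n]}{[i_{k+1} i_k]} \;=\; 1 + u\,[i_k n][i_{k+1} n],
\end{equation*}
which is precisely \eqref{eq1lem0} of Lemma~\ref{lem0} applied to the roots $\alpha = \varepsilon_{i_{k+1}} - \varepsilon_{i_k}$ and $\beta = \varepsilon_n - \varepsilon_{i_{k+1}}$ (so $\alpha + \beta = \varepsilon_n - \varepsilon_{i_k}$). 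This closes the argument.
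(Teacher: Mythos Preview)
Your argument is correct and follows essentially the same route as the paper: invariance of $\rho_{k+2}$ under $s_k$ together with Remark~\ref{symmi} for the second identity, and a case split on $w^{-1}(n)$ for the first, with the only nontrivial case $w^{-1}(n)\ge k+2$ reduced to the formal group law identity \eqref{eq1lem0}. The sole cosmetic difference is that the paper merges your sub-cases $w^{-1}(n)=k$ and $w^{-1}(n)=k+1$ by observing that $Y_k f$ is always $s_k$-invariant (so it suffices to compute at one of the two), whereas you handle each by a direct cancellation; both are equally valid.
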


\begin{proof}
From the definition of $\rho_{k+2}$, we can see that $\rho_{k+2}(w)=\rho_{k+2}(ws_k)$, which means that $\rho_{k+2}$ is $s_k$-invariant. Thus, the second formula in \eqref{lem2f} follows by Remark \ref{symmi}. 

To derive the first formula, we start by considering $w$ with $w^{-1}(n)=k+1$, so $w=i_1\ldots i_k n \ldots$. Since $\rho_{k+1}(ws_k)=0$, we have
\[(Y_k\,\rho_{k+1})(w)=\frac{[i_1 n]\ldots [i_k n]}{[i_k n]}=[i_1 n]\ldots [i_{k-1} n]=(Y_k\,\rho_{k+1})(w s_k)\,.\]
So we have
\begin{equation}\label{pf1}
(Y_k\,\rho_{k+1})(w)=\rho_k(w)=\rho_k(w)+u\rho_{k+2}(w)\,,\;\;\;\;\;\mbox{for $w^{-1}(n)\le k+1$}\,.
\end{equation}
On the other hand, if $w^{-1}(n)\ge k+2$, i.e., $w=i_1\ldots i_k i_{k+1}\ldots$ with all shown entries different from $n$, we calculate as follows, based on \eqref{eq1lem0}:
\begin{align}\label{pf2}
(Y_k\,\rho_{k+1})(w)&=\frac{[i_1 n]\ldots [i_k n]}{[i_k i_{k+1}]}+\frac{[i_1 n]\ldots [i_{k-1} n][i_{k+1} n]}{[i_{k+1} i_{k}]}\\
&=[i_1 n]\ldots [i_{k-1} n]\left(\frac{[i_k n]}{[i_k i_{k+1}]}+\frac{[i_{k+1} n]}{[i_{k+1} i_{k}]}\right)\nonumber\\
&=[i_1 n]\ldots [i_{k-1} n](1+u[i_{k} n][i_{k+1} n])\nonumber\\
&=\rho_k(w)+u\rho_{k+2}(w)\,.\nonumber
\end{align}
The first formula in \eqref{lem2f} follows by combining \eqref{pf1} and \eqref{pf2}. 
\end{proof}

\begin{lem}\label{lem1} We have
\begin{equation*}\tau_k\ldots\tau_{n-1}\,\rho_n=(t+t^{-1})^{n-k}\rho_k-t(t+t^{-1})^{n-k-1}\rho_{k+1}\,.\end{equation*}
\end{lem}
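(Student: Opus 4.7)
The plan is to prove this by (reverse) induction on $k$, decreasing from $k=n$ down to $k=1$. The key inputs are the two identities of Lemma~\ref{lem2}, the substitution $\tau_k=(t+t^{-1})Y_k-t$ from \eqref{taui}, and the crucial numerical identity
\[u(t+t^{-1})^2=1\,,\]
which holds because in our setup $\mu_2=-(t+t^{-1})^{-2}$ and $u=-\mu_2$.

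For the base case $k=n$, the product $\tau_k\cdots\tau_{n-1}$ is empty, so the left-hand side equals $\rho_n$. The right-hand side is $(t+t^{-1})^0\,\rho_n-t(t+t^{-1})^{-1}\rho_{n+1}$, which equals $\rho_n$ since $\rho_{n+1}\equiv 0$ by the convention following \eqref{defrho}.

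For the inductive step, assume the formula holds for $k+1$. Applying $\tau_k=(t+t^{-1})Y_k-t$ to both sides of that formula gives
\begin{align*}
\tau_k\cdots\tau_{n-1}\,\rho_n &= (t+t^{-1})^{n-k}\,Y_k\rho_{k+1} - t(t+t^{-1})^{n-k-1}\rho_{k+1}\\
&\quad - t(t+t^{-1})^{n-k-1}\,Y_k\rho_{k+2} + t^2(t+t^{-1})^{n-k-2}\rho_{k+2}\,.
\end{align*}
Substituting $Y_k\rho_{k+1}=\rho_k+u\rho_{k+2}$ and $Y_k\rho_{k+2}=\rho_{k+2}$ from Lemma~\ref{lem2}, the terms involving $\rho_k$ and $\rho_{k+1}$ already match the desired right-hand side, while the coefficient of $\rho_{k+2}$ becomes
\[(t+t^{-1})^{n-k}u - t(t+t^{-1})^{n-k-1} + t^2(t+t^{-1})^{n-k-2} = (t+t^{-1})^{n-k-2}\bigl[1-t(t+t^{-1})+t^2\bigr] = 0\,,\]
where I used $u(t+t^{-1})^2=1$ in the first term and then expanded.

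There is no real obstacle here: once Lemma~\ref{lem2} is in hand, the induction is a mechanical verification, and the only subtlety is the arithmetic cancellation that pins down exactly why the constant $\mu_2$ must take the value $-(t+t^{-1})^{-2}$ for this Kazhdan--Lusztig-style identity to hold. In spirit, this is precisely the reason for the parameter choice made earlier in the setup.
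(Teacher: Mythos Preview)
Your proof is correct and follows essentially the same approach as the paper: decreasing induction on $k$ with base case $k=n$, substituting $\tau_k=(t+t^{-1})Y_k-t$, and invoking Lemma~\ref{lem2} together with the identity $u(t+t^{-1})^{2}=1$. The only cosmetic difference is that the paper first simplifies $t\,\tau_k(\rho_{k+2})=\rho_{k+2}$ separately before combining terms, whereas you expand everything at once and then observe the cancellation in the $\rho_{k+2}$ coefficient; the underlying computation is identical.
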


\begin{proof} We proceed by decreasing induction on $k$, with base case $k=n$, and we use freely the formula for the action of $\tau_i$ coming from \eqref{taui}. Assuming the statement for $k+1$, we need to calculate
\begin{equation}\label{pf3}
\tau_k(\tau_{k+1}\ldots\tau_{n-1}\,\rho_n)=(t+t^{-1})^{n-k-1}\tau_k(\rho_{k+1})-t(t+t^{-1})^{n-k-2}\tau_k(\rho_{k+2})\,.
\end{equation}
By Lemma \ref{lem2}, we first have
\[
t\tau_k(\rho_{k+2})=t\left((t+t^{-1})Y_k\,\rho_{k+2}-t\rho_{k+2}\right)=\rho_{k+2}\,.
\]
Based on this, and by using Lemma \ref{lem2} again, we can rewrite the right-hand side of \eqref{pf3} as follows:
\begin{align*}
&\left((t+t^{-1})^{n-k}\, Y_k\,\rho_{k+1}-t(t+t^{-1})^{n-k-1}\rho_{k+1}\right)-(t+t^{-1})^{n-k-2}\rho_{k+2}\\
=&(t+t^{-1})^{n-k}\left(Y_k\,\rho_{k+1}-u\rho_{k+2}\right)-t(t+t^{-1})^{n-k-1}\rho_{k+1}\\
=&(t+t^{-1})^{n-k}\rho_k-t(t+t^{-1})^{n-k-1}\rho_{k+1}\,.
\end{align*}
This concludes the induction proof.
\end{proof}

\begin{proof}[Proof of Theorem {\rm \ref{mainthm} (2)} for $w=w_\circ$ in type $A_{n-1}$]
We proceed by induction on $n$. The statement for $n=2$ and $n=3$ is Example~\ref{sa1a2}. Assuming that it holds for $n-1$, for $n\ge 4$, we will prove it for $n$. We identify the Weyl group $W_{n-1}$ with its image (under the standard embedding) in $W_n$, as a parabolic subgroup, and let $W^{n-1}$ be the set of lowest coset representatives in $W_n/W_{n-1}$. Let $w_\circ^n$ and $w_\circ^{n-1}$ be the longest elements of $W_n$ and $W_{n-1}$, respectively, and $N:=\ell(w_\circ^n)=\binom{n}{2}$. Let $\zeta_\emptyset^n$ and $\zeta_\emptyset^{n-1}$ be the respective classes for the flag varieties corresponding to $SL_n$ and $SL_{n-1}$. Using also the standard embedding of the formal group algebra $S_{n-1}$ into $S_n$, we extend the function $\rho_k^{n-1}\,:\,W_{n-1}\rightarrow S_{n-1}$ defined in \eqref{defrho} to a function from $W_n$ to $S_n$, by defining it to be identically $0$ outside $W_{n-1}$. 

As the Kazhdan-Lusztig polynomials $P_{v,w_\circ^n}(t)$ are $1$ for all $v$ in $W_n$, we have
\begin{equation}\label{klexp}\gamma_{w_\circ^n}=\sum_{w\in W_n} t^{N-\ell(w)}\tau_w\,.\end{equation}
Since every element of $W_n$ can be factored uniquely as $w=uv$ with $u\in W^{n-1}$, $v\in W_{n-1}$, and $\ell(w)=\ell(u)+\ell(v)$, we can rewrite the above expression as
\begin{equation}\label{gamma0}\gamma_{w_\circ^n}=\sum_{u\in W^{n-1}} t^{n-1-\ell(u)}\tau_u\sum_{v\in W_{n-1}} t^{(N-n+1)-\ell(v)}\tau_v=\sum_{u\in W^{n-1}}t^{n-1-\ell(u)}\tau_u\gamma_{w_\circ^{n-1}}\,.\end{equation}
Thus, we need to calculate
\begin{equation}\label{eqxx}
(t+t^{-1})^{-N}\Gamma_{w_\circ^n}(\zeta_\emptyset^n)=\sum_{k=1}^{n} t^{k-1}(t+t^{-1})^{-(n-1)}\,\tau_{k}\ldots\tau_{n-1}\left((t+t^{-1})^{-(N-n+1)}\,\Gamma_{w_\circ^{n-1}}(\zeta_\emptyset^n)\right)\,.
\end{equation}

By Remark \ref{symmi} and \eqref{topclass}, for any $i_1,\ldots,i_p$ between $1$ and $n-2$, we have
\[Y_{i_1}\ldots Y_{i_p}\,\zeta_\emptyset^n=[1,n]\ldots[n-1,n]\,Y_{i_1}\ldots Y_{i_p}\,\zeta_\emptyset^{n-1}\,.\]
Therefore, by induction and \eqref{defrho}, we have
\begin{align*}(t+t^{-1})^{-(N-n+1)}\,\Gamma_{w_\circ^{n-1}}(\zeta_\emptyset^n)&=[1,n]\ldots[n-1,n]\left((t+t^{-1})^{-(N-n+1)}\,\Gamma_{w_\circ^{n-1}}(\zeta_\emptyset^{n-1})\right)\\
&=[1,n]\ldots[n-1,n]\,\rho_1^{n-1}=\rho_{n}^{n}\,.
\end{align*}
Based on the above formula and Lemma \ref{lem1}, we can rewrite \eqref{eqxx} as follows:
\begin{align*}
(t+t^{-1})^{-N}\Gamma_{w_\circ^n}(\zeta_\emptyset^n)&=\sum_{k=1}^{n} t^{k-1}(t+t^{-1})^{-n+1}\,\tau_{k}\ldots\tau_{n-1}\,\rho_{n}^{n}\\
&=\sum_{k=1}^{n}\left(t^{k-1}(t+t^{-1})^{-(k-1)}\rho_k^{n}-t^k(t+t^{-1})^{-k}\rho_{k+1}^{n}\right)\\
&=\rho_1^{n}\,.
\end{align*}
But $\rho_1^{n}$ is identically $1$, as needed.
\end{proof}

\begin{proof}[Proof of Theorem {\rm \ref{mainthm} (2)} in type $A_{n-1}$] Recalling Example~\ref{sa1a2}, we see that we can assume $n\ge 4$. Fix $m$ between $2$ and $n$. Let $w_m$ be the (unique) highest representative of a coset in $W_n/W_{n-1}$ with $w_m(n)=m$; its length is $\ell(w_m)=N-m+1$. It is easy to check that the Schubert variety $X({w_m^{-1}})$ is non-singular by the well-known Lakshmibai-Sandhya pattern-avoidance criterion \cite{lasa}: the permutation $w_m^{-1}$ avoids the patterns $3412$ and $4231$.

We claim that $w=uv\le w_m$ (the factorization being the one in the above proof) if and only if $k:=w(n)=u(n)\ge m$. This can be seen using the well-known criterion for comparison in Bruhat order (see, e.g., \cite{BL}[Section~3.2]): $i_1\ldots i_n\le j_1\ldots j_n$ if and only if $\{i_1\ldots i_p\}\uparrow\le \{j_1\ldots j_p\}\uparrow$ for any $p=1,\ldots,n-1$, where the notation means that we arrange the elements of the two sets in increasing order, and we compare the two $p$-tuples entry by entry. Indeed, on the one hand it is not hard to see that, if $k\ge m$, then $w\le w_k\le w_m$. On the other hand, if $k<m$, then we cannot have $w\le w_m$ (just apply the above criterion with $p=n-1$). 

Based on the above facts, we can express $\gamma_{w_m}$ as in \eqref{gamma0}, but now we only sum over $\tau_k\ldots\tau_{n-1}$ in $W^{n-1}$ with $k\ge m$ (indeed, the fact that $X({w_m^{-1}})$ is non-singular implies that all the Kazhdan-Lusztig polynomials $P_{w_m,w}(t)$ with $w\le w_m$ are $1$, due to the well-known symmetry of these polynomials under inversion of their indices). Like in the above proof, and based on the previous result, we then calculate
\begin{align*}
{\mathfrak S}_{w_m^{-1}}=(t+t^{-1})^{-(N-m+1)}\Gamma_{w_m}(\zeta_\emptyset^n)&=\sum_{k=m}^{n} t^{k-m}(t+t^{-1})^{-(n-m)}\,\tau_{k}\ldots\tau_{n-1}\,\rho_{n}^{n}\\
&=\sum_{k=m}^{n}\left(t^{k-m}(t+t^{-1})^{-(k-m)}\rho_k^{n}-t^{k-m+1}(t+t^{-1})^{-(k-m+1)}\rho_{k+1}^{n}\right)\\
&=\rho_m^{n}\,.
\end{align*}
The proof is now concluded by applying Lemma~\ref{prodcl}.
\end{proof} 

\begin{proof}[Proof of Theorem {\rm \ref{mainthm} (2)} for $w=w_\circ$ in type $C_{n}$] We extend the notation in Example~\ref{bsc2} for type $C_2$. For start, the simple roots are $\alpha_0:=2\varepsilon_1$ and $\alpha_i:=\varepsilon_{i+1}-\varepsilon_i$. We let $[ij]:=y_{-(\varepsilon_j-\varepsilon_i)}$, for $i\ne j$ in $\{\pm 1,\,\ldots,\,\pm n\}$, where $\overline{\imath}:=-i$ and $\varepsilon_{\overline{\imath}}:=-\varepsilon_i$; in particular, $[\overline{\imath}i]:=y_{-2\varepsilon_i}$ and $[\overline{\imath}j]:=y_{-(\varepsilon_i+\varepsilon_j)}$. Also note that $[\overline{\jmath}\overline{\imath}]=[ij]$. 

It will be useful to represent an element (signed permutation) of the hyperoctahedral group $W_n$ of type $C_n$ as a bijection $w$ from $I:=\{\overline{n-1}<\ldots<\overline{1}<0<1<\ldots<n\}$ to $\{\pm 1,\,\ldots,\,\pm n\}$ with the property $i_{\overline{\jmath-1}}=\overline{\imath_j}$ for $j=1,\ldots,n$, where $i_k:=w(k)$ for $k\in I$. The action of the simple reflection $s_j$, for $j=0,\ldots,n-1$, consists in swapping the positions $j,j+1$ (and $\overline{\jmath},\overline{\jmath}+1$). This formalism has the advantage that it makes the definition \eqref{defxy} of the push-pull operator $Y_0$ completely similar to $Y_j$ for $j=1,\ldots,n-1$, while we can also define $Y_{\overline{\jmath}}$ similarly and we have $Y_{\overline{\jmath}}=Y_j$; indeed, it suffices to note that $[i_0 i_1]=[\overline{i_1}i_1]$, and that $[i_j i_{j+1}]=[i_{\overline{\jmath}} i_{\overline{\jmath}+1}]$ for $j=1,\ldots,n-1$. By defining $\tau_{\overline{\jmath}}$ via \eqref{taui}, we also have $\tau_{\overline{\jmath}}=\tau_j$.

We can define the functions $\rho_k^n:W_n\rightarrow S_n$ precisely like in \eqref{defrho}, except that $k$ is now any integer (usually between $\overline{n-1}$ and $n+1$) and $w=i_{\overline{n-1}}\ldots i_{\overline{1}}i_0i_1\ldots i_n$. Clearly $\rho_k^n$ is identically $0$ is $k>n$, and $\rho_{\overline{n-1}}^n$ is understood to be identically $1$, as usual. Observe that Lemmas~\ref{lem2} and \ref{lem1} still hold, where now $k\in\{0,\pm 1,\ldots,\pm(n-1)\}$. Note that, hidden in this formalism, are some peculiar applications of formula \eqref{eq1lem0} in Lemma~\ref{lem0}; for instance, the respective triple of roots $(\alpha,\beta,\alpha+\beta)$ can be one of the following, which contain a root $2\varepsilon_i$:
\begin{equation}\label{extriples}(2\varepsilon_i,\,\varepsilon_n-\varepsilon_i,\,\varepsilon_i+\varepsilon_n)\,,\;\;\;\;\mbox{or}\;\;\;\;(\varepsilon_i-\varepsilon_n,\,2\varepsilon_n,\,\varepsilon_i+\varepsilon_n)\,,\;\;\;\;\mbox{or}\;\;\;\;(\varepsilon_n-\varepsilon_i,\,\varepsilon_i+\varepsilon_n,\,2\varepsilon_n)\;\;\;\mbox{etc.}\end{equation}

The proof, as well as the notation, are completely analogous to the ones above for type $A_{n-1}$. Thus, we proceed by induction on $n$, with the base case having been treated in Example~\ref{sa1a2}. We now have $N:=\ell(w_\circ^n)=n^2$. The $2n$ lowest coset representatives in $W^{n-1}$ are of the form (in the window notation) $i_1\ldots i_{n}$ with $0<i_1<\ldots<i_{n-1}$, while $i_n$ can be positive or negative. As \eqref{klexp} still holds, we have
\begin{equation*}\gamma_{w_\circ^n}=\sum_{u\in W^{n-1}} t^{2n-1-\ell(u)}\tau_u\sum_{v\in W_{n-1}} t^{(N-2n+1)-\ell(v)}\tau_v=\sum_{u\in W^{n-1}}t^{2n-1-\ell(u)}\tau_u\gamma_{w_\circ^{n-1}}\,.\end{equation*}
Thus, we need to calculate
\begin{align*}
(t+t^{-1})^{-N}\Gamma_{w_\circ^n}(\zeta_\emptyset^n)&=\sum_{k=1}^{n} t^{n+k-1}(t+t^{-1})^{-(2n-1)}\,\tau_{k}\ldots\tau_{n-1}\left((t+t^{-1})^{-(N-2n+1)}\,\Gamma_{w_\circ^{n-1}}(\zeta_\emptyset^n)\right)\\
&+\sum_{k=1}^{n} t^{n-k}(t+t^{-1})^{-(2n-1)}\,\tau_{k-1}\ldots\tau_1\tau_0\tau_1\ldots\tau_{n-1}\left((t+t^{-1})^{-(N-2n+1)}\,\Gamma_{w_\circ^{n-1}}(\zeta_\emptyset^n)\right)\\
&=\sum_{k=-n+1}^{n} t^{n+k-1}(t+t^{-1})^{-(2n-1)}\,\tau_{k}\ldots\tau_{n-1}\left((t+t^{-1})^{-(N-2n+1)}\,\Gamma_{w_\circ^{n-1}}(\zeta_\emptyset^n)\right)\,.
\end{align*}
Like in type $A_{n-1}$, the bracket is calculated by induction as follows:
\[[\overline{n}n][\overline{n-1},n]\ldots[\overline{1}n][1n]\ldots[n-1,n]\rho_{\overline{n-2}}^{n-1}=\rho_{n}^{n}\,.\]
The proof is concluded in the same way as in type $A_{n-1}$, based on the new version of Lemma~\ref{lem1}.
\end{proof}

\begin{proof}[Proof of Theorem {\rm \ref{mainthm} (2)} in type $C_{n}$] Recalling Example~\ref{sa1a2}, we see that we can assume $n\ge 3$. Fix $m$ between $\overline{n-1}$ and $n$. Let $w_m$ be the (unique) highest representative of a coset in $W_n/W_{n-1}$ with $w_m(n)=m$; its length is
\[\ell(w_m)=\casetwo{N-(m+n-1)}{m>0}{N-(m+n)}{m<0}\]
It is easy to check that the Schubert variety $X({w_m^{-1}})$ is smooth by the pattern-avoidance criteria in \cite[Theorems~8.3.16~and~8.3.17]{BL}, see also the corresponding table in \cite[Chapter~13]{BL}. 

We then check that $w\le w_m$ if and only if $w(n)\ge m$. This can be seen in the same way as in type $A$, except that now we use Proctor's criterion for comparison in Bruhat order of signed permutations (see, e.g., \cite[Section~8.3]{BL}): $i_1\ldots i_n\le j_1\ldots j_n$ if and only if $\{i_p,\ldots i_n\}\uparrow\ge \{j_p\ldots j_n\}\uparrow$ for any $p=1,\ldots,n$, where the notation is the same as above, in type $A$. 

We conclude the proof by applying the same procedure as above to calculate ${\mathfrak S}_{w_m^{-1}}$; in fact, the obtained expression looks formally the same as the corresponding one above.
\end{proof} 

\begin{rem} The proof method we used above in types $A_{n-1}$ and $C_n$ does not work for type $B_n$. Indeed, when replacing the roots $2\varepsilon_i$ with $\varepsilon_i$, formula \eqref{eq1lem0} in Lemma~\ref{lem0} does not apply because the corresponding triples \eqref{extriples} are not of the form $(\alpha,\beta,\alpha+\beta)$. Our method does not apply to type $D_n$ either, for the following reason. Note first that we can adjust the notation used in type $C_n$; for instance, we let $\alpha_0:=\varepsilon_1+\varepsilon_2$, and we exclude the factor $[\overline{n}n]$ in the definition \eqref{defrho} of the functions $\rho_k^n$. However, the calculation of $Y_0\,\rho_1^n$ cannot be handled directly by formula \eqref{eq1lem0} in Lemma~\ref{lem0} because it involves expressions of the form
\[\frac{y_{\alpha+\beta}y_{\alpha+\gamma}}{y_\alpha}+\frac{y_\beta y_\gamma}{y_{-\alpha}}\,.\]
Instead, we can cancel the denominators by applying \eqref{eq1lem0} twice, but this produces a more complicated expression, with five terms instead of two. 
\end{rem}

\bibliographystyle{alpha}

\newcommand{\etalchar}[1]{$^{#1}$}

\end{document}